\documentclass[11pt]{article}
\usepackage[numbers,square]{natbib}
\usepackage{amsmath}
\usepackage{amsthm}
\usepackage{amsfonts}
\usepackage{amssymb}
\usepackage{siunitx}
\usepackage{commath}
\usepackage{graphicx}
\usepackage{xspace}
\usepackage{color}
\usepackage[lofdepth,lotdepth]{subfig}
\usepackage{stmaryrd}
\usepackage{url}
\usepackage{amsthm}
\usepackage{graphbox}
\usepackage{footnote}
\usepackage{multirow}
\makesavenoteenv{tabular}
\makesavenoteenv{table}
\usepackage[hidelinks]{hyperref}
\hypersetup{colorlinks = true, urlcolor = blue,
  linkcolor = blue, citecolor = blue}
\usepackage{cleveref}

\usepackage[margin=0.7in]{geometry}
\makeatletter
\newcommand{\tnorm}{\@ifstar\@tnorms\@tnorm}
\newcommand{\@tnorms}[1]{%
  \left|\mkern-1.5mu\left|\mkern-1.5mu\left|
   #1
  \right|\mkern-1.5mu\right|\mkern-1.5mu\right|
}
\newcommand{\@tnorm}[2][]{%
  \mathopen{#1|\mkern-1.5mu#1|\mkern-1.5mu#1|}
  #2
  \mathclose{#1|\mkern-1.5mu#1|\mkern-1.5mu#1|}
}
\makeatother

\newtheorem{theorem}{Theorem}
\newtheorem{lemma}{Lemma}

\title{Preconditioning of a hybridizable discontinuous Galerkin method
  for the coupled Stokes--Darcy system}
\author{
  E. Henr\'iquez\thanks{Department of Applied Mathematics, University of
    Waterloo, ON, Canada (\url{ehenriqu@uwaterloo.ca}),
    \url{https://orcid.org/0000-0002-0243-0368}}
  \and
  M. Kuchta\thanks{Simula Research Laboratory, Oslo, Norway
    (\url{miroslav@simula.no}),
    \url{https://orcid.org/0000-0002-3832-0988}}
  \and
  J. J. Lee\thanks{Department of Mathematics, Baylor University,
    TX, USA (\url{jeonghun_lee@baylor.edu}),
    \url{https://orcid.org/0000-0001-5201-8526}}
  \and
  S. Rhebergen\thanks{Department of Applied Mathematics, University of
    Waterloo, ON, Canada (\url{srheberg@uwaterloo.ca}),
    \url{https://orcid.org/0000-0001-6036-0356}}}
\begin{document}
\maketitle
\begin{abstract}
  We propose parameter-robust preconditioners for the statically
  condensed linear system arising from a hybridizable discontinuous
  Galerkin discretization of the coupled Stokes--Darcy system. The
  design strategy relies on first applying the
  operator-preconditioning framework [\emph{Numer. Linear Algebra
    Appl.}, 18(1):1--40, 2011] to construct a preconditioner for the
  non-condensed discretization. This is done by proving uniform
  well-posedness of the scheme. Next, we prove robustness of the
  resulting condensed preconditioner applied to the reduced linear
  system using the framework we proposed in [\emph{SIAM
    J. Sci. Comput.}, 47(6):A3212--A3238, 2025]. Numerical examples
  demonstrate robustness of the proposed preconditioners.
\end{abstract}
\section{Introduction}
\label{s:introduction}

The coupled Stokes--Darcy equations, with Beavers--Joseph--Saffman
interface conditions \cite{beavers1967boundary,saffman1971boundary},
describe the interaction between free flow and porous media
flow. Various finite element discretizations have been introduced to
discretize this coupled system of partial differential
equations. These include a continuous Galerkin locally conservative
discretization \cite{riviere2005locally}, fully mixed formulations
\cite{camano2015new,gatica2011analysis}, a discretization that imposes
the interface coupling terms strongly in the discrete velocity space
\cite{marquez2015strong}, stabilized Crouzeix--Raviart element
formulations \cite{burman2005stabilized}, and a strongly conservative
discontinuous Galerkin method \cite{kanschat2010strongly}. In this
paper, however, we discretize the coupled Stokes--Darcy problem by a
hybridizable discontinuous Galerkin (HDG) method.

Various HDG discretizations have been introduced for the coupled
Stokes--Darcy problem. These include a fully mixed HDG method with
stress, vorticity, velocity, and velocity trace as Stokes unknowns and
velocity, pressure, and pressure trace for Darcy
\cite{gatica2017analysis}; a divergence-conforming HDG method for
Stokes combined with a mixed method for Darcy \cite{fu2018strongly};
and stabilized hybrid formulations using vector and scalar multipliers
for the Stokes and Darcy velocities, respectively
\cite{igreja2018stabilized,igreja2025scalar}. We will discretize the
coupled Stokes--Darcy problem using the HDG discretization proposed
and analyzed in \cite{cesmelioglu2020embedded}. This discretization
imposes mass conservation pointwise on the cells and is
$H(\text{div})$-conforming on the whole domain, i.e., the
discretization is strongly conservative \cite{kanschat2010strongly}.

The HDG method was introduced by Cockburn et al.
\cite{cockburn2009hybridizable} as a discontinuous Galerkin method
that allows for static condensation. With static condensation the
(local) cell degrees-of-freedom are eliminated from the HDG
discretization. The resulting reduced system is one for (global) face
unknowns only. Once the reduced problem has been solved, the cell
unknowns are recovered inexpensively by local postprocessing. The most
expensive step in the static condensation process is solving for the
face unknowns. While parameter-robust preconditioners have been
designed for the reduced problem resulting from HDG discretizations of
the Stokes and Darcy problems separately (see
\cite{henriquez2025parameter}), we are not aware of any
parameter-robust preconditioners for HDG discretizations of the
coupled Stokes--Darcy problem. This is the objective of the present
work.

The operator-mapping framework (see \cite{mardal2011preconditioning}
for a review) has proven to be an effective approach to determine
parameter-robust preconditioners for symmetric problems. By performing
a well-posedness analysis in appropriately weighted norms that depend
on the problem parameters, one can establish uniform well-posedness of
the discretization, which in turn induces block-diagonal
preconditioners given by the matrix representations of the norms used
in the analysis. This framework has been extensively applied to
problems such as reaction--diffusion, Stokes, Darcy, time-dependent
Stokes, the Reissner--Mindlin plate model, and optimal control
problems (see \cite{mardal2011preconditioning} and the references
therein).

The operator-mapping framework has also been applied to determine
parameter-robust preconditioners for (continuous Galerkin)
discretizations of the coupled Stokes--Darcy problem. For instance, in
\cite{holter2020robust} the authors extend the operator-mapping
preconditioning framework to multiphysics problems using a formulation
in which a Lagrange multiplier is used to couple flow problems across
the interface \cite{layton2002coupling}. In \cite{holter2021robust},
parameter-robust preconditioners are developed for the Stokes--Darcy
problem with the Darcy equation in primal form, again employing an
interface Lagrange multiplier. In \cite{boon2022robust}, the authors
present preconditioners for three different formulations and their
associated discretizations, using conforming and nonconforming finite
element methods as well as finite volume methods, with the Darcy
equation in primal form \cite{discacciati2002mathematical}. More
recently, parameter-robust preconditioners for a Lagrange-multiplier
formulation have been introduced in \cite{hu2025parameter} that rely
on fractional operators on the interface. By imposing the interface
conditions directly in the finite element polynomial space, the
authors in \cite{boon2025parameter} are able to determine
parameter-robust preconditioners that avoid fractional operators.

In this work, we apply the framework recently introduced by the
authors in \cite{henriquez2025parameter}. This framework constitutes
an extension of the operator-preconditioning framework specifically
for hybridized discretizations. The construction of preconditioners
for hybridized discretizations proceeds in two steps:
\begin{enumerate}
\item A preconditioner for the non-condensed system is obtained by
  establishing uniform well-posedness with respect to the physical and
  discretization parameters. This step follows the usual
  operator-mapping framework \cite{mardal2011preconditioning}.
\item All cell degrees-of-freedom are eliminated from the
  preconditioner from step 1 resulting in a preconditioner for the
  reduced problem. To determine whether this reduced preconditioner is
  a parameter-robust preconditioner for the reduced problem one needs
  to verify \cref{eq:robust-schur-precon-1}.
\end{enumerate}
This approach has successfully been applied to IP-HDG discretizations
of the stationary and time-dependent Stokes equations
\cite{henriquez2025parameter,henriquez2026robust}, a hybrid-BDM
discretization of the Darcy equations with a reaction term
\cite{henriquez2025parameter}, and a four-field HDG discretization of
Biot’s consolidation model \cite{henriquez2025preconditioning}. In
addition to presenting the first block preconditioners for a
hybridized discretization of the coupled Stokes--Darcy problem, we
also emphasize that our preconditioners neither rely on fractional
operators nor require enforcing the interface conditions in the finite
element polynomial spaces, thereby simplifying their implementation.

The remainder of this paper is organized as follows. In
\Cref{sec:stokes-darcy-sys} we present the coupled Stokes--Darcy
problem and in \Cref{sec:hdg-method} we introduce its HDG
discretization. Next, in \Cref{s:well-posedness}, we present the
uniform well-posedness analysis of the discretization in weighted
inner-products and norms. In \Cref{sec:preconditioning-framework} we
present the preconditioning framework and analyze the preconditioners
for the reduced problem. We provide numerical examples in
\Cref{s:numex} and end with conclusions in \Cref{s:conclusions}.

\section{The coupled Stokes--Darcy problem}
\label{sec:stokes-darcy-sys}

To describe the model let us consider a bounded polygonal domain
$\Omega\subset \mathbb{R}^{\dim}$, where $\dim = 2,3$. The boundary of
the domain is denoted by $\partial \Omega$. Due the coupled nature of
the system, we partition $\Omega$ into disjoint regions $\Omega^s$ and
$\Omega^d$ such that $\Omega = \Omega^s \cup \Omega^d$. The external
boundary of the \emph{Stokes} region $\Omega^s$ is denoted by
$\Gamma^s := \partial\Omega \cap \partial\Omega^s$ while the external
boundary of the \emph{Darcy} region $\Omega^d$ is denoted by
$\Gamma^d := \partial\Omega \cap \partial\Omega^d$. The interface
between the two regions is denoted by
$\Gamma^I := \partial\Omega^s \cap \partial\Omega^d$.

The coupled Stokes--Darcy system is given by:
\begin{subequations}
  \label{eq:stokes-darcy}
  \begin{align}
  \label{eq:stokes-darcy-a}
    -\nabla \cdot 2 \mu \varepsilon(u) + \nabla p
    &= f^s && \text{ in } \Omega^s,
    \\
    \label{eq:stokes-darcy-b}
    \mu \kappa^{-1} u + \nabla p 
    &= 0 && \text{ in } \Omega^d,
    \\
    \label{eq:stokes-darcy-c}
    - \nabla \cdot u
    &= \chi^d f^d && \text{ in } \Omega,
    \\
    \label{eq:stokes-darcy-d}
    u
    &= 0 && \text{ on } \Gamma^s,
    \\
    \label{eq:stokes-darcy-e}
    u \cdot n
    & = 0 && \text{ on } \Gamma^d,
  \end{align}
\end{subequations}
where $\varepsilon(u) := (\nabla u + (\nabla u)^T)/2$ is the symmetric
gradient and $\chi^d$ is the characteristic function with value equal
to $1$ in the Darcy region and $0$ otherwise. Furthermore,
$f^s \in [L^2(\Omega^s)]^{\dim}$ and $f^d \in L^2(\Omega^d)$ are
forcing terms, $\mu > 0$ is the kinematic viscosity, and $\kappa > 0$
is the permeability constant. For ease of notation we will denote the
restriction of the velocity $u$ and pressure $p$ to $\Omega^j$ by
$u^j$ and $p^j$, respectively, for $j = s,d$.

On the interface $\Gamma^I$, we impose the following transfer
conditions:
\begin{subequations}
  \label{eq:interface-cond}
  \begin{align}
  \label{eq:interface-cond-a}
    u^s \cdot n 
    &= u^d\cdot n && \text{ on } \Gamma^I,
    \\
    \label{eq:interface-cond-b}
    p^s - 2 \mu \varepsilon(u^s) n \cdot n
    &= p^d && \text{ on } \Gamma^I,
    \\
    \label{eq:interface-cond-c}
    - 2 \mu (\varepsilon(u^s) n)^t
    &= \alpha \mu \kappa^{-1/2} (u^s)^t && \text{ on } \Gamma^I,
  \end{align}
\end{subequations}
where $n$ is the unit normal vector on the interface $\Gamma^I$
(outward with respect to $\Omega^s$), $(v)^t := v - (v \cdot n) n$
denotes the tangential component of a vector $v$, and
\cref{eq:interface-cond-c} is the Beavers--Joseph--Saffman law
\cite{beavers1967boundary,saffman1971boundary} where $\alpha > 0$ is
an experimentally determined dimensionless constant. Following
\cite[Section 3]{boon2020parameter}, we assume that there exists a
constant $c_{\alpha} > 0$, independent of $\mu$, $\kappa$, and
$\alpha$, such that
\begin{equation}
  \label{eq:alpha-kappa-rel}
  \alpha \le c_{\alpha} \kappa^{1/2}.
\end{equation}

\section{The hybridizable discontinuous Galerkin discretization}
\label{sec:hdg-method}

Let $j = s,d$, denote by $\mathcal{T}_h^j := \{K\}$ a family of
simplicial triangulations of the domain $\Omega^j$, and let
$\mathcal{T}_h := \mathcal{T}_h^s \cup \mathcal{T}_h^d$. We define
$h := \max_{K \in \mathcal{T}_h} h_K$, where $h_K$ is the diameter of
an element $K \in \mathcal{T}_h$. The outward unit normal vector on
$\partial K$ is denoted by $n^j$ if $K \in \mathcal{T}_h^j$, for
$j = s, d$. If no confusion can occur, we will drop the superscript
and write $n$. We assume that the mesh consists of shape-regular
elements.  Additionally, we denote the
set of all faces by $\mathcal{F}_h$ and the union of all faces by
$\Gamma_0$. The set of all faces corresponding to the interface
$\Gamma^I$ is denoted by $\mathcal{F}_h^I$, the set of all faces lying
in $\Bar{\Omega}^j$, is denoted by $\mathcal{F}_h^j$, and the union of
all faces in $\Bar{\Omega}^j$ is denoted by $\Gamma^j_0$ ($j = s,d$).

We introduce the following finite element function spaces:
\begin{align*}
    V_h &:= \{ v_h \in [L^2(\Omega)]^{\dim}:\ 
    v_h \in [\mathbb{P}_k (K)]^{\dim}\ \forall K \in \mathcal{T}_h\}, \\
    V_h^j &:= \{ v_h \in [L^2(\Omega^j)]^{\dim}:\ 
    v_h \in [\mathbb{P}_k (K)]^{\dim}\ \forall K \in \mathcal{T}_h^j\}, \\
    Q_h &:= \{ q_h \in L_0^2(\Omega):\ 
    q_h \in \mathbb{P}_{k-1} (K)\ \forall K \in \mathcal{T}_h\}, \\
    Q_h^j &:= \{ q_h \in L^2(\Omega^j):\ 
    q_h \in \mathbb{P}_{k-1} (K)\ \forall K \in \mathcal{T}_h^j\},\ j = \{s,d\},\\
    \bar{V}_h &:= \{ \bar{v}_h \in [L^2(\Gamma_0^s)]^{\dim}:\ 
    \bar{v}_h \in [\mathbb{P}_k (F)]^{\dim}\ \forall F \in \mathcal{F}_h^s,\ \bar{v}_h = 0 \text{ on } \Gamma^s\},\\
    \bar{Q}_h^j &:= \{ \bar{q}_h^j \in L^2(\Gamma_0^j):\ 
    q_h^j \in \mathbb{P}_{k}(F)\ \forall F \in \mathcal{F}_h^j\},\ j = \{s,d\},
\end{align*}
where $\mathbb{P}_r(K)$ and $\mathbb{P}_r(F)$ denote the sets of
polynomials of degree at most $r$ on a cell $K$ and face $F$,
respectively, and
$L_0^2(\Omega) := \{q \in L^2(\Omega) : \int_{\Omega} q \dif x = 0
\}$. For ease of notation, we define the following function space
pairs: $\boldsymbol{V}_h := V_h \times \bar{V}_h$,
$\boldsymbol{Q}_h := Q_h \times \bar{Q}_h^s \times \bar{Q}_h^d $,
$\boldsymbol{V}_h^s := V_h^s \times \bar{V}_h$, and
$\boldsymbol{Q}_h^j := Q_h^j \times \bar{Q}_h^j$, for $j =
s,d$. Function pairs are also denoted in bold face, for example,
$\boldsymbol{v}_h := (v_h, \bar{v}_h) \in \boldsymbol{V}_h$,
$\boldsymbol{q}_h := (q_h, \bar{q}_h^s, \bar{q}_h^d) \in
\boldsymbol{Q}_h$,
$\boldsymbol{v}_h^s := (v_h^s, \bar{v}_h) \in \boldsymbol{V}_h^s$, and
$\boldsymbol{q}_h^j = (q_h, \bar{q}_h^j) \in
\boldsymbol{Q}_h^j$. Furthermore, we set $X_h := V_h \times Q_h$,
$\bar{X}_h := \bar{V}_h \times \bar{Q}_h^s \times \bar{Q}_h^d$, and
$\boldsymbol{X}_h := X_h \times \bar{X}_h$.

Let $D \subset \mathbb{R}^{\dim}$ be a Lipschitz domain. We denote the
$L^2$-inner product and norm on $D$ by $(\cdot, \cdot)_D$ and
$\norm[0]{\cdot}_D$, respectively. Additionally, we denote the
$L^2$-inner product on a submanifold $S$ of codimension 1 by
$\langle \cdot, \cdot \rangle_S$. We then define the following inner
products:
$(\phi, \psi)_{\mathcal{T}_h} := \sum_{K \in \mathcal{T}_h}
(\phi,\psi)_K$ and
$\langle \phi, \psi\rangle_{\partial\mathcal{T}_h} := \sum_{K \in
  \mathcal{T}_h} \langle \phi, \psi \rangle_{\partial K}$ if $\phi$
and $\psi$ are scalars and with similar definitions holding for the
vector case. The norms induced by these inner products are denoted by
$\norm[0]{\cdot}_{\mathcal{T}_h}$ and
$\norm[0]{\cdot}_{\partial \mathcal{T}_h}$.

The HDG method for the coupled Stokes--Darcy system is given by
\cite{cesmelioglu2020embedded}: Find
$\boldsymbol{x}_h := (\boldsymbol{u}_h, \boldsymbol{p}_h) \in
\boldsymbol{X}_h$ such that
\begin{equation}
  \label{eq:hdg-discretization}
  a_h(\boldsymbol{x}_h, \boldsymbol{y}_h)
  = (f^s, v_h)_{\mathcal{T}_h^s} + (f^d, q_h)_{\mathcal{T}_h^d}
  \quad \forall \boldsymbol{y}_h := (\boldsymbol{v}_h, \boldsymbol{q}_h) \in \boldsymbol{X}_h,
\end{equation}
where
\begin{align*}
  a_h(\boldsymbol{x}_h, \boldsymbol{y}_h)
  :=& d_h(\boldsymbol{u}_h, \boldsymbol{v}_h)
      + b_h(\boldsymbol{p}_h, \boldsymbol{v}_h)
      + b_h(\boldsymbol{q}_h, \boldsymbol{u}_h),
  \\
  d_h(\boldsymbol{u}_h, \boldsymbol{v}_h)
  :=& d_h^s(\boldsymbol{u}_h, \boldsymbol{v}_h)
      + d_h^d(u_h, v_h)
      + d_h^I(\bar{u}_h, \bar{v}_h),
  \\
  d_h^s(\boldsymbol{u}_h, \boldsymbol{v}_h)
  :=& 2 \mu (\varepsilon(u_h), \varepsilon(v_h))_{\mathcal{T}_h^s}
      + 2 \mu \eta \langle h_K^{-1} (u_h - \bar{u}_h), v_h - \bar{v}_h\rangle_{\partial \mathcal{T}_h^s}
  \\
    & - 2 \mu \langle \varepsilon(u_h) n^s, v_h - \bar{v}_h\rangle_{\partial \mathcal{T}_h^s}
      - 2 \mu \langle \varepsilon(v_h) n^s, u_h - \bar{u}_h\rangle_{\partial\mathcal{T}_h^s},
  \\
  d_h^d(u_h, v_h) 
  :=& \mu \kappa^{-1} (u_h, v_h)_{\mathcal{T}_h^d},
  \\
  d_h^I(\bar{u}_h, \bar{v}_h)
  :=& \alpha \mu \kappa^{-1/2} \langle \bar{u}_h^t, \bar{v}_h^t\rangle_{\Gamma^I},
  \\
  b_h(\boldsymbol{p}_h, \boldsymbol{v}_h)
  :=& \sum_{j = s, d} \del[1]{ b_h^j(\boldsymbol{p}_h^j, v_h) 
      + b_h^{I, j}(\bar{p}_h^j, \bar{v}_h) },
  \\
  b_h^j(\boldsymbol{p}_h^j, v_h) 
  :=& - (p_h, \nabla \cdot v_h)_{\mathcal{T}_h^j}
      + \langle \bar{p}_h^j, v_h \cdot n^j\rangle_{\partial \mathcal{T}_h^j},
  \\
  b_h^{I, j}(\bar{p}_h^j, \bar{v}_h) 
  :=& - \langle \bar{p}_h^j, \bar{v}_h \cdot n^j \rangle_{\Gamma^I},
\end{align*}
where
$\langle h_K^{-1} (u_h - \bar{u}_h), v_h - \bar{v}_h\rangle_{\partial
  \mathcal{T}_h^s}:= \sum_{K \in \mathcal{T}_h^s} \langle h_K^{-1}
(u_h - \bar{u}_h), v_h - \bar{v}_h\rangle_{\partial K}$ and $\eta > 1$
is a sufficiently large penalty parameter. In the numerical
experiments in \cref{s:numex} we choose $\eta = 4 k^2$.

\section{Uniform well-posedness}
\label{s:well-posedness}

To prove uniform well-posedness of the HDG method introduced in
\cref{eq:hdg-discretization} we require weighted inner products. These
are defined in \cref{ss:weightedinprod}. The norms induced by these
inner products are then used to prove uniform boundedness and uniform
stability of the HDG bilinear form $a_h(\cdot, \cdot)$ in
\cref{ss:boundedness,ss:stability}, respectively, and as a
consequence, uniform well-posedness. In what follows, we will refer to
a positive constant that is independent of $\mu$, $\kappa$, and $h$ as
a uniform constant.

\subsection{Weighted inner-products}
\label{ss:weightedinprod}

We consider the following weighted inner products for
$\boldsymbol{u}_h, \boldsymbol{v}_h \in \boldsymbol{V}_h$ and
$\boldsymbol{p}_h, \boldsymbol{q}_h \in \boldsymbol{Q}_h$:
\begin{subequations}
  \label{eq:inner-product}
  \begin{align}
    (\boldsymbol{u}_h, \boldsymbol{v}_h)_v 
    &:= (\boldsymbol{u}_h^s, \boldsymbol{v}_h^s)_{v,s} 
      +(u_h^d, v_h^d)_{v,d}
      + \alpha \mu \kappa^{-1/2} \langle \bar{u}_h^t, \bar{v}_h^t\rangle_{\Gamma^I},
    \\
    (\boldsymbol{q}_h, \boldsymbol{p}_h)_q
    &:= (\boldsymbol{p}_h^s, \boldsymbol{q}_h^s)_{q,s}
      + (\boldsymbol{p}_h^d, \boldsymbol{q}_h^d)_{q,d}
      + (\alpha \mu \kappa^{-1/2})^{-1} \langle \bar{p}_h^d, \bar{q}_h^d \rangle_{\Gamma^I},    
  \end{align}
  where, for
  $\boldsymbol{u}_h^s, \boldsymbol{v}_h^s \in \boldsymbol{V}_h^s$,
  $u_h^d,v_h^d \in V_h^d$,
  $\boldsymbol{p}_h^s, \boldsymbol{q}_h^s \in \boldsymbol{Q}_h^s$, and
  $\boldsymbol{p}_h^d, \boldsymbol{q}_h^d \in \boldsymbol{Q}_h^d$,  
  \begin{align}
    (\boldsymbol{u}_h^s, \boldsymbol{v}_h^s)_{v,s}
    &:= 2 \mu (\varepsilon(u_h^s), \varepsilon(v_h^s))_{\mathcal{T}_h^s}
      + 2 \mu \eta \langle h_K^{-1} (u_h^s - \bar{u}_h^s), v_h^s - \bar{v}_h^s \rangle_{\partial\mathcal{T}_h^s},
    \\
    (u_h^d, v_h^d)_{v,d}
    &:= \mu \kappa^{-1} (u_h^d, v_h^d)_{\mathcal{T}_h^d},
    \\
    (\boldsymbol{p}_h^s, \boldsymbol{q}_h^s)_{q,s}
    &:= (2 \mu)^{-1} (p_h^s, q_h^s)_{\mathcal{T}_h^s}
      + (2 \mu \eta)^{-1} \langle h_K\bar{p}_h^s, \bar{q}_h^s\rangle_{\partial \mathcal{T}_h^s}, 
    \\
    (\boldsymbol{p}_h^d, \boldsymbol{q}_h^d)_{q,d}
    &:= \mu^{-1} \kappa (\nabla p_h^d, \nabla q_h^d)_{\mathcal{T}_h^d}
      + \mu^{-1} \kappa \eta \langle h_K^{-1} (p_h^d - \bar{p}_h^d), q_h^d - \bar{q}_h^d \rangle_{\partial 
      \mathcal{T}_h^d}.    
  \end{align}
\end{subequations}
The norms induced by the above inner products are denoted by
$\tnorm{\boldsymbol{v}_h}_{v}$, $\tnorm{\boldsymbol{q}_h}_{q}$,
$\tnorm{\boldsymbol{v}_h^s}_{v,s}$, $\tnorm{v_h^d}_{v,d}$,
$\tnorm{\boldsymbol{q}_h^s}_{q,s}$, and
$\tnorm{\boldsymbol{q}_h^d}_{q,d}$, respectively, while
\begin{equation}
  \label{eq:normXhSD}
  \tnorm{\boldsymbol{y}_h}_{\boldsymbol{X}_h}^2 :=
  \tnorm{\boldsymbol{v}_h}_v^2 + \tnorm{\boldsymbol{q}_h}_q^2
  \quad \forall \boldsymbol{y}_h := (v_h,q_h,\bar{v}_h,\bar{q}_h^s,\bar{q}_h^d) \in \boldsymbol{X}_h.
\end{equation}
Additionally, we introduce the following norm
\begin{equation}
  \label{eq:facet-norm-v}
  \tnorm{\bar{v}_h}_{v,h}^2 := \norm[0]{h_K^{-1/2}(\bar{v}_h - m_K(\bar{v}_h))}_{\partial \mathcal{T}_h}^2 
  \quad \forall \bar{v}_h \in \Bar{V}_h,
\end{equation}
where
$m_K(\bar{v}_h) := |\partial K|^{-1} \int_{\partial K} \bar{v}_h \dif
s$. In addition, we define $M_K(v_h) := |K|^{-1} \int_{K} v_h \dif x$.

\subsection{Boundedness}
\label{ss:boundedness}

\begin{lemma}[Boundedness]
  \label{lem:boundedness}
  There exists a uniform constant $c_b$ such that
  \begin{equation}
    \label{eq:boundedness}
    |a_h(\boldsymbol{x}_h, \boldsymbol{y}_h)|
    \leq c_b \tnorm{\boldsymbol{x}_h}_{\boldsymbol{X}_h} \tnorm{\boldsymbol{y}_h}_{\boldsymbol{X}_h}
    \quad \forall \boldsymbol{x}_h, \boldsymbol{y}_h \in \boldsymbol{X}_h.
  \end{equation}
\end{lemma}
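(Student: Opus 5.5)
We bound each piece of $a_h(\boldsymbol{x}_h,\boldsymbol{y}_h)=d_h(\boldsymbol{u}_h,\boldsymbol{v}_h)+b_h(\boldsymbol{p}_h,\boldsymbol{v}_h)+b_h(\boldsymbol{q}_h,\boldsymbol{u}_h)$ separately. For $d_h$, the Darcy term $d_h^d$ and the interface term $d_h^I$ are bounded by Cauchy--Schwarz, since they are exactly the corresponding pieces of $(\cdot,\cdot)_v$. For $d_h^s$, the volume and penalty terms are likewise pieces of $(\cdot,\cdot)_{v,s}$. For the consistency terms we use the discrete trace inequality $h_K^{1/2}\norm[0]{\varepsilon(u_h)n}_{\partial K}\le c\norm[0]{\varepsilon(u_h)}_K$, which gives
\[
2\mu|\langle \varepsilon(u_h)n,v_h-\bar v_h\rangle_{\partial\mathcal{T}_h^s}|\le c\,(2\mu)^{1/2}\norm[0]{\varepsilon(u_h)}_{\mathcal{T}_h^s}\,\eta^{-1/2}(2\mu\eta)^{1/2}\norm[0]{h_K^{-1/2}(v_h-\bar v_h)}_{\partial\mathcal{T}_h^s}.
\]
Since $\eta>1$, this is at most $c\tnorm{\boldsymbol{u}_h^s}_{v,s}\tnorm{\boldsymbol{v}_h^s}_{v,s}$. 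The symmetric term is handled identically.

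For $b_h$ on the Stokes side, integration by parts is not needed. We bound $(p_h,\nabla\cdot v_h)_{\mathcal{T}_h^s}$ using $\norm[0]{\nabla\cdot v_h}\le \sqrt{\dim}\norm[0]{\varepsilon(v_h)}$, which yields $(2\mu)^{-1/2}\norm[0]{p_h}\,(2\mu)^{1/2}\norm[0]{\varepsilon(v_h)}$. The face terms combine as $\langle\bar p_h^s,(v_h-\bar v_h)\cdot n\rangle_{\partial\mathcal{T}_h^s}$, because the single-valued $\bar p_h^s\,\bar v_h\cdot n$ cancels on interior faces, vanishes on $\Gamma^s$ where $\bar v_h=0$, and on $\Gamma^I$ is exactly cancelled by $b_h^{I,s}$. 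This combination is bounded by the weights $h_K$ versus $h_K^{-1}$ and $(2\mu\eta)^{\mp1}$, which are precisely those in the norms.

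On the Darcy side we integrate by parts elementwise:
\[
b_h^d(\boldsymbol{p}_h^d,v_h)=(\nabla p_h,v_h)_{\mathcal{T}_h^d}+\langle \bar p_h^d-p_h,v_h\cdot n\rangle_{\partial\mathcal{T}_h^d}.
\]
The first term is bounded by $\mu^{-1/2}\kappa^{1/2}\norm[0]{\nabla p_h}\,\mu^{1/2}\kappa^{-1/2}\norm[0]{v_h}$. The second term uses the discrete trace inverse inequality $\norm[0]{v_h}_{\partial K}\le ch_K^{-1/2}\norm[0]{v_h}_K$, together with the weight $\mu^{-1}\kappa\eta h_K^{-1}$ on $p_h-\bar p_h$, which gives $c\,\eta^{-1/2}\tnorm{\boldsymbol{q}}_{q,d}\tnorm{v}_{v,d}$. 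Finally, $b_h^{I,d}=-\langle\bar p_h^d,\bar v_h\cdot n^d\rangle_{\Gamma^I}$ remains.

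The main obstacle is this last interface term, because the normal component $\bar v_h\cdot n$ is not controlled by $d_h^I$, which only sees tangential components, while $\bar p_h^d$ carries the weight $(\alpha\mu\kappa^{-1/2})^{-1/2}$. Our first attempt is to write $\bar v_h\cdot n=(\bar v_h-v_h^s)\cdot n+v_h^s\cdot n$ on $\Gamma^I$. The jump part is bounded by the Stokes penalty, but it needs $\bar p_h^d$ weighted by $h_K^{1/2}(2\mu\eta)^{-1/2}$, which is not available on $\Gamma^I$. Because of this, we would instead pass through $\bar p_h^s$. We rewrite $\bar p_h^d$ on $\Gamma^I$ as $\bar p_h^s-(\bar p_h^s-\bar p_h^d)$, and use the interface weight together with the assumption \cref{eq:alpha-kappa-rel}: $\alpha\mu\kappa^{-1/2}\le c_\alpha\mu$ converts $(\alpha\mu\kappa^{-1/2})^{-1}$ control into $\mu^{-1}$-type control. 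The $v_h^s\cdot n$ part is then bounded by a trace inequality for broken $H^1$ functions on $\Omega^s$, i.e.\ $\norm[0]{v_h^s}_{\Gamma^I}\le c\tnorm{\boldsymbol{v}_h^s}_{v,s}\mu^{-1/2}$ via a discrete Korn inequality (using $\bar v_h=0$ on $\Gamma^s$). This step is where we expect the real difficulty. It may require combining the $\Gamma^I$ term with the Darcy face term $\langle\bar p_h^d,v_h\cdot n\rangle$ on interface faces before estimating. If all of this fails, we would reinterpret $b_h^{I,d}$ as coupling only through the tangential/normal split and use \cref{eq:alpha-kappa-rel} directly.

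Summing all contributions with Cauchy--Schwarz gives \cref{eq:boundedness}, with $c_b$ depending only on shape regularity, $k$, $\dim$ and $c_\alpha$.
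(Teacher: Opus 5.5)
Your treatment of $d_h$, of $b_h^s+b_h^{I,s}$ (via the cancellation to $\langle\bar p_h^s,(v_h-\bar v_h)\cdot n\rangle_{\partial\mathcal{T}_h^s}$), and of $b_h^d$ (elementwise integration by parts plus a discrete trace inequality) is correct. It is essentially what the estimates cited in the paper's proof amount to. The gap is the interface term $b_h^{I,d}$. You leave it open, and the route you sketch would not close it.

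Your claim that the jump part $\langle\bar p_h^d,(\bar v_h-v_h^s)\cdot n\rangle_{\Gamma^I}$ needs a weight on $\bar p_h^d$ that is ``not available'' is false. By \cref{eq:alpha-kappa-rel}, $\mu^{-1}\le c_\alpha(\alpha\mu\kappa^{-1/2})^{-1}$, so the $\Gamma^I$ part of $\tnorm{\boldsymbol{p}_h}_q$ controls $\mu^{-1/2}\norm[0]{\bar p_h^d}_{\Gamma^I}$. Since $h_K\eta^{-1}$ is bounded, this dominates $(2\mu\eta)^{-1/2}\norm[0]{h_K^{1/2}\bar p_h^d}_{\Gamma^I}$ up to a uniform constant.

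The detour through $\bar p_h^s$ makes things worse. It creates $\langle\bar p_h^s,\bar v_h\cdot n\rangle_{\Gamma^I}$, but $\bar p_h^s$ is only controlled in $(2\mu\eta)^{-1/2}\norm[0]{h_K^{1/2}\bar p_h^s}$. Pairing that with a non-jump trace $\norm[0]{\bar v_h}_{\Gamma^I}\lesssim\mu^{-1/2}\tnorm{\boldsymbol{v}_h^s}_{v,s}$ loses a factor $h^{-1/2}$. Meanwhile the remainder $\langle\bar p_h^s-\bar p_h^d,\bar v_h\cdot n\rangle_{\Gamma^I}$ still contains $\bar p_h^d$. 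The tangential/normal fallback is also a non-starter, since $b_h^{I,d}$ involves only the normal component.

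The missing step is a direct weighted Cauchy--Schwarz, which is \cref{eq:boundedness-aux4} in the paper:
\begin{equation*}
  |b_h^{I,d}(\bar p_h^d,\bar v_h)|
  \le (\alpha\mu\kappa^{-1/2})^{-1/2}\norm[0]{\bar p_h^d}_{\Gamma^I}\,
  (\alpha\mu\kappa^{-1/2})^{1/2}\norm[0]{\bar v_h}_{\Gamma^I}
  \le c_\alpha^{1/2}c_\Gamma\tnorm{\boldsymbol{p}_h}_q\tnorm{\boldsymbol{v}_h}_v .
\end{equation*}
The first factor is literally a piece of $\tnorm{\boldsymbol{p}_h}_q$. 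For the second, $\alpha\kappa^{-1/2}\le c_\alpha$ reduces it to $\mu^{1/2}\norm[0]{\bar v_h}_{\Gamma^I}$. The \emph{full} trace of $\bar v_h$ on $\Gamma^I$, normal and tangential, is then bounded by the Stokes energy norm via \cref{eq:interface-norm-bound-2}. The proof of that bound is precisely your splitting $\bar v_h=(\bar v_h-v_h^s)+v_h^s$: a broken Korn/trace inequality for $v_h^s$ (using $\bar v_h=0$ on $\Gamma^s$) and the penalty for the jump.

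So you had every ingredient. What was missing is the recognition that the normal component is controlled by $\tnorm{\boldsymbol{v}_h^s}_{v,s}$ rather than by $d_h^I$, and that no splitting of $\bar p_h^d$ is needed.
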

\begin{proof}
  Combining the estimate of \cite[Lemma 4.3]{rhebergen2017analysis}
  with the definition of the norms and H\"older's inequality for sums,
  we deduce that there exists a uniform constant $c$ such that
  \begin{equation}
    \label{eq:boundedness-aux1}
    d_h(\boldsymbol{u}_h, \boldsymbol{v}_h)
    \leq c \tnorm{\boldsymbol{u}_h}_v \tnorm{\boldsymbol{v}_h}_v 
    \quad \forall \boldsymbol{u}_h, \boldsymbol{v}_h \in \boldsymbol{V}_h.
  \end{equation}
  Next, following the steps from the proof of \cite[Lemma
  3]{cesmelioglu2023hybridizable2}, but using the weighted norms, we
  find that there exists a uniform constant $c$ such that
  \begin{equation}
    \label{eq:boundedness-aux2}
    b_h^s(\boldsymbol{q}_h^s, v_h)
    + b_h^{I,s}(\bar{q}_h^s, \bar{v}_h)
    \leq c \tnorm{\boldsymbol{q}_h}_q \tnorm{\boldsymbol{v}_h}_v
    \quad \forall (\boldsymbol{v}_h, \boldsymbol{q}_h)
    \in \boldsymbol{V}_h \times \boldsymbol{Q}_h.
  \end{equation}
  Using integration by parts, the Cauchy--Schwarz inequality with
  weighted norms, and the discrete trace inequality, we deduce that
  there exists a uniform constant $c$ such that (see also
  \cite[Appendix A.1]{henriquez2025parameter}),
  \begin{equation}
    \label{eq:boundedness-aux3}
    b_h^d(\boldsymbol{q}_h^d, {v}_h^d)
    \leq c \tnorm{\boldsymbol{q}_h}_{q} \tnorm{\boldsymbol{v}_h}_v
    \quad \forall (\boldsymbol{v}_h, \boldsymbol{q}_h) 
    \in \boldsymbol{V}_h \times \boldsymbol{Q}_h.
  \end{equation}
  Finally, note that by the Cauchy--Schwarz inequality,
  \cref{eq:interface-norm-bound-2,eq:alpha-kappa-rel} we find:
  \begin{equation}
    \label{eq:boundedness-aux4}
    b_h^{I,d}(\bar{q}_h^d, \bar{v}_h)
    \leq (\alpha \kappa^{-1/2} \mu)^{-1/2} \norm[0]{\bar{q}_h^d}_{\Gamma^I}
    (\alpha \kappa^{-1/2} \mu)^{1/2} \norm[0]{\bar{v}_h}_{\Gamma^I}
    \leq c_{\alpha}^{1/2}c_\Gamma \tnorm{\boldsymbol{q}_h}_q \tnorm{\boldsymbol{v}_h}_v.
  \end{equation}
  Therefore, combining
  \cref{eq:boundedness-aux1,eq:boundedness-aux2,eq:boundedness-aux3,eq:boundedness-aux4}
  we deduce \cref{eq:boundedness}. \qed
\end{proof}
\subsection{Stability and well-posedness}
\label{ss:stability}

Before proceeding with the proof of uniform stability, we recall some
known results. By \cite[Lemma 2]{cesmelioglu2020embedded}, we have
that there exists a uniform constant $c_1$, such that
\begin{equation}
  \label{eq:coercivity-dh}
  d_h(\boldsymbol{v}_h, \boldsymbol{v}_h) 
  \geq c_1 \big( \tnorm{\boldsymbol{v}_h}_{v,s}^2
  + \tnorm{v_h}_{v,d}^2 
  + \alpha \mu \kappa^{-1/2} \norm[0]{\bar{v}_h^t}_{\Gamma^I}^2 \big)
  \quad \forall \boldsymbol{v}_h \in \boldsymbol{V}_h.
\end{equation}
It is shown in \cite[Lemma 1]{rhebergen2018preconditioning} that there
exists a uniform constant $c_2$
\begin{equation}
  \label{eq:inf-sup-bhs}
  \sup_{\boldsymbol{0} \neq \boldsymbol{v}_h \in \boldsymbol{V}_h^s}
  \dfrac{b_h^s(\boldsymbol{q}_h^s, v_h) 
  + b_h^{I,s}(\bar{q}_h^s, \bar{v}_h)}{\tnorm{\boldsymbol{v}_h}_{v,s}}
  \geq c_2 \tnorm{\boldsymbol{q}_h^s}_{q,s}
  \quad \forall \boldsymbol{q}_h^s \in \boldsymbol{Q}_h^s.
\end{equation}
We furthermore prove in Appendix \ref{ap:proofinfsup} that there
exists a uniform constant $c_3$, such that
\begin{equation}
  \label{eq:inf-sup-bhd}
  \sup_{0 \neq v_h \in V_h^d}
  \dfrac{b_h^d(\boldsymbol{q}_h^d, v_h)}{\tnorm{v_h}_{v,d}}
  \geq c_3 \tnorm{\boldsymbol{q}_h^d}_{q,d} 
  \quad \forall \boldsymbol{q}_h^d \in \boldsymbol{Q}_h^d.
\end{equation}
The inf-sup conditions \cref{eq:inf-sup-bhs,eq:inf-sup-bhd} are used
to prove the following global inf-sup condition.

\begin{lemma}[Stability]
  \label{lem:inf-sup-bh}
  There exists a uniform constant $c_s$ such that
  \begin{equation}
    \label{eq:inf-sup-bh}
    \sup_{0 \neq \boldsymbol{v}_h \in \boldsymbol{V}_h}
    \dfrac{b_h(\boldsymbol{q}_h, \boldsymbol{v}_h)}{\tnorm{\boldsymbol{v}_h}_v}
    \geq c_s 
    \tnorm{\boldsymbol{q}_h}_{q}^2 
    \quad \forall \boldsymbol{q}_h \in \boldsymbol{Q}_h.
  \end{equation}
\end{lemma}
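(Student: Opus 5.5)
Note first that the right-hand side of \cref{eq:inf-sup-bh} is presumably meant to be $c_s\tnorm{\boldsymbol{q}_h}_q$; the square is a typo, since the left-hand side is homogeneous of degree one in $\boldsymbol{q}_h$. I will prove the linear version.

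The strategy is to build, for a given $\boldsymbol{q}_h = (q_h, \bar{q}_h^s, \bar{q}_h^d)$, a test function as a sum of three pieces:
\[
\boldsymbol{v}_h = \boldsymbol{v}_h^1 + \boldsymbol{v}_h^2 + \boldsymbol{v}_h^3 .
\]
Each piece controls one part of $\tnorm{\boldsymbol{q}_h}_q^2$, and the pieces are combined with weights (a standard "sum of inf-sups" argument).

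\textbf{Stokes piece.} Use \cref{eq:inf-sup-bhs} to choose $\boldsymbol{v}_h^1 \in \boldsymbol{V}_h^s$, extended by zero into $\Omega^d$, with
\[
b_h^s(\boldsymbol{q}_h^s, v_h^1) + b_h^{I,s}(\bar{q}_h^s, \bar{v}_h^1) \ge c_2 \tnorm{\boldsymbol{q}_h^s}_{q,s}^2, \qquad \tnorm{\boldsymbol{v}_h^1}_{v,s} = \tnorm{\boldsymbol{q}_h^s}_{q,s}.
\]
Since $v_h^1 = 0$ in $\Omega^d$, we have $b_h^d(\boldsymbol{q}_h^d, v_h^1) = 0$. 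However, $\bar{v}_h^1$ may be nonzero on $\Gamma^I$, which produces the cross term $b_h^{I,d}(\bar{q}_h^d, \bar{v}_h^1) = -\langle \bar{q}_h^d, \bar{v}_h^1\cdot n^d\rangle_{\Gamma^I}$. Also, $\tnorm{\boldsymbol{v}_h^1}_v$ contains the extra interface term $\alpha\mu\kappa^{-1/2}\norm[0]{(\bar{v}_h^1)^t}_{\Gamma^I}^2$.

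Both are handled by the interface bound \cref{eq:interface-norm-bound-2} used in Lemma~\ref{lem:boundedness}, which gives $(\alpha\mu\kappa^{-1/2})^{1/2}\norm[0]{\bar{v}_h}_{\Gamma^I} \le c_\Gamma c_\alpha^{1/2} \tnorm{\boldsymbol{v}_h}_{v,s}$. Consequently $\tnorm{\boldsymbol{v}_h^1}_v \le c\,\tnorm{\boldsymbol{q}_h^s}_{q,s}$. The cross term is bounded by Young's inequality:
\[
\langle \bar{q}_h^d, \bar{v}_h^1\cdot n^d\rangle_{\Gamma^I} \le \epsilon\, \tnorm{\boldsymbol{q}_h^s}_{q,s}^2 + C_\epsilon (\alpha\mu\kappa^{-1/2})^{-1} \norm[0]{\bar{q}_h^d}_{\Gamma^I}^2 .
\]

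\textbf{Darcy piece.} Use \cref{eq:inf-sup-bhd} to pick $v_h^2 \in V_h^d$, extended by zero into $\Omega^s$, with $\bar{v}_h^2 = 0$. Then $b_h(\boldsymbol{q}_h, \boldsymbol{v}_h^2) = b_h^d(\boldsymbol{q}_h^d, v_h^2) \ge c_3 \tnorm{\boldsymbol{q}_h^d}_{q,d}^2$ and $\tnorm{\boldsymbol{v}_h^2}_v = \tnorm{\boldsymbol{q}_h^d}_{q,d}$. There is no cross term here.

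\textbf{Interface piece.} This piece controls $(\alpha\mu\kappa^{-1/2})^{-1}\norm[0]{\bar{q}_h^d}_{\Gamma^I}^2$. Take $v_h^3 = 0$, and let $\bar{v}_h^3$ vanish on all faces except those on $\Gamma^I$, where
\[
\bar{v}_h^3 = -(\alpha\mu\kappa^{-1/2})^{-1} \bar{q}_h^d\, n^d .
\]
This lies in $\bar{V}_h$ because $\bar{q}_h^d$ is a degree-$k$ polynomial on each face. The resulting terms are:
\begin{itemize}
\item $b_h^{I,d}(\bar{q}_h^d, \bar{v}_h^3) = (\alpha\mu\kappa^{-1/2})^{-1}\norm[0]{\bar{q}_h^d}_{\Gamma^I}^2$;
\item the tangential part of $\bar{v}_h^3$ is zero, so $d_h^I$ does not contribute to its norm;
\item $b_h^{I,s}(\bar{q}_h^s, \bar{v}_h^3) = -\langle \bar{q}_h^s, \bar{v}_h^3\cdot n^s\rangle_{\Gamma^I}$;
\item the Stokes face term $\langle \bar{q}_h^s, v_h\cdot n\rangle$ vanishes since $v_h^3 = 0$.
\end{itemize}
The norm is $\tnorm{\boldsymbol{v}_h^3}_v^2 = 2\mu\eta \norm[0]{h_K^{-1/2}\bar{v}_h^3}_{\Gamma^I}^2$.

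\textbf{Main obstacle.} Controlling $\tnorm{\boldsymbol{v}_h^3}_v$ and $\langle \bar{q}_h^s, \bar{v}_h^3\cdot n^s\rangle_{\Gamma^I}$ is the crux. The $h_K^{-1}$ weight does not match the unweighted interface norm in $\tnorm{\cdot}_q$, and
\[
2\mu\eta h^{-1}(\alpha\mu\kappa^{-1/2})^{-2} = 2\eta\,\kappa/(\alpha^2\mu h)
\]
is not uniformly bounded. If this fails, I would instead place the interface velocity so that it is paired with the Darcy face penalty. Concretely, I would exploit $\bar{p}_h^d = p_h^d - (p_h^d - \bar{p}_h^d)$ on $\Gamma^I$, bound $\norm[0]{\bar{q}_h^d}_{\Gamma^I}$ by $\tnorm{\boldsymbol{q}_h^d}_{q,d}$ via a discrete trace/Poincaré argument, and check that the scalings match using \cref{eq:alpha-kappa-rel}. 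In other words, the interface term should follow from the Darcy piece, possibly up to a mean-value component handled by $\boldsymbol{v}_h^3$ with a smoothed $\mathcal{O}(1)$ interface profile rather than an $h$-local one. The Stokes interface cross term would similarly be absorbed using $\tnorm{\boldsymbol{q}_h^s}_{q,s}$ with its $h_K$ weight.

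\textbf{Combination.} Finally set $\boldsymbol{v}_h = \boldsymbol{v}_h^1 + \boldsymbol{v}_h^2 + \delta\boldsymbol{v}_h^3$ with $\delta$ small. The cross terms are absorbed by Young's inequality, giving
\[
b_h(\boldsymbol{q}_h, \boldsymbol{v}_h) \ge c\,\tnorm{\boldsymbol{q}_h}_q^2 \quad \text{and} \quad \tnorm{\boldsymbol{v}_h}_v \le C\,\tnorm{\boldsymbol{q}_h}_q ,
\]
which proves the claim.
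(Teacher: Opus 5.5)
Your proposal has a genuine gap at the step you flag yourself: nothing in it actually controls the interface part $(\alpha\mu\kappa^{-1/2})^{-1}\norm[0]{\bar{q}_h^d}_{\Gamma^I}^2$ of $\tnorm{\boldsymbol{q}_h}_q^2$. The $h$-local test function $\boldsymbol{v}_h^3$ fails for the scaling reason you give. Your combination step, with a \emph{small} weight on $\boldsymbol{v}_h^3$, could not work in any case. That piece would have to dominate the term $C_\epsilon(\alpha\mu\kappa^{-1/2})^{-1}\norm[0]{\bar{q}_h^d}_{\Gamma^I}^2$ coming from the Stokes cross term, so its weight would have to exceed $C_\epsilon$.

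The paper uses no interface test function. It takes $\boldsymbol{w}_h=\boldsymbol{w}_h^s$ on $\mathcal{T}_h^s$ and $\delta(w_h^d,0)$ on $\mathcal{T}_h^d$, with a \emph{large} weight $\delta=c_\alpha^2c_2^{-2}c_\Gamma^4\alpha^{-2}$. It then uses \cref{eq:interface-norm-bound-4} with \cref{eq:alpha-kappa-rel} to get $(\alpha\mu\kappa^{-1/2})^{-1}\norm[0]{\bar{q}_h^d}_{\Gamma^I}^2\le c_\Gamma^2c_\alpha\alpha^{-2}\tnorm{\boldsymbol{q}_h^d}_{q,d}^2$. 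This bound lets the Darcy piece absorb $\langle\bar{q}_h^d,\bar{w}_h^s\cdot n^d\rangle_{\Gamma^I}$, and it recovers the interface term in the final norm comparison. That is essentially your fallback, so there you were on the paper's route. You are also right that the intended right-hand side is $c_s\tnorm{\boldsymbol{q}_h}_q$.

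Your caveat about a mean-value component is well founded, however, and a smooth $\mathcal{O}(1)$ interface profile cannot repair it. \Cref{eq:interface-norm-bound-4} fails when $q_h|_{\Omega^d}$ and $\bar{q}_h^d$ equal the same constant, because its right-hand side then vanishes. Such pressures lie in $\boldsymbol{Q}_h$, since the zero mean is imposed only over $\Omega$.

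Concretely, take $q_h=\bar{q}_h^s=c_s$ on $\Omega^s$ and $q_h=\bar{q}_h^d=c_d$ on $\Omega^d$, with $c_s|\Omega^s|+c_d|\Omega^d|=0$. Cellwise integration by parts makes $b_h^s$ and $b_h^d$ vanish, leaving $b_h(\boldsymbol{q}_h,\boldsymbol{v}_h)=(c_d-c_s)\langle 1,\bar{v}_h\cdot n^s\rangle_{\Gamma^I}$. By \cref{eq:interface-norm-bound-2}, the supremum is therefore at most $C\mu^{-1/2}|c_d|$ over \emph{every} test function, with $C$ independent of $\mu,\kappa,h,\alpha$. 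On the other hand, $\tnorm{\boldsymbol{q}_h}_q\ge(\alpha^{-1}\mu^{-1}\kappa^{1/2}|\Gamma^I|)^{1/2}|c_d|$. The ratio is $\mathcal{O}(\alpha^{1/2}\kappa^{-1/4})$, which tends to zero as $\kappa\to\infty$ with $\alpha$ fixed, a regime allowed by \cref{eq:alpha-kappa-rel}.

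Hence no choice of $\boldsymbol{v}_h^3$ closes your argument, and the paper's appeal to \cref{eq:interface-norm-bound-4} has the same hole. A $\kappa$-uniform $c_s$ requires an additional hypothesis (at least an upper bound on $\kappa^{1/2}/\alpha$) or a pressure space without this mode.
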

\begin{proof}
  By \cref{eq:inf-sup-bhs} we have that given
  $\boldsymbol{q}_h^s \in \boldsymbol{Q}_h^s$ there exists
  $\boldsymbol{w}_h^s \in \boldsymbol{V}_h^s$ such that
  \begin{equation}
    \label{eq:inf-sup-bh-aux1}
    b_h^s(\boldsymbol{q}_h^s, w_h^s) 
    + b_h^{I,s}(\bar{q}_h^s, \bar{w}_h^s)
    = \tnorm{\boldsymbol{q}_h^s}_{q,s}^2
    \quad\text{ and }\quad
    \tnorm{\boldsymbol{w}_h^s}_{v,s} \leq c_2^{-1} \tnorm{\boldsymbol{q}_h^s}_{q,s},
  \end{equation}
  and by \cref{eq:inf-sup-bhd} we have that given
  $\boldsymbol{q}_h^d \in \boldsymbol{Q}_h^d$ there exists
  $w_h^d \in V_h^d$ such that
  \begin{equation}
    \label{eq:inf-sup-bh-aux2}
    b_h^d(\boldsymbol{q}_h^d, w_h^d) 
    = \tnorm{\boldsymbol{q}_h^d}_{q,d}^2
    \quad\text{ and }\quad
    \tnorm{w_h^d}_{v,d} \leq c_3^{-1} \tnorm{\boldsymbol{q}_h^d}_{q,d}. 
  \end{equation}
  Let $\delta$ be a uniform constant that we will choose later. Let
  $\boldsymbol{w}_h = \boldsymbol{w}_h^s$ on $\mathcal{T}_h^s$ and
  $\boldsymbol{w}_h = \delta (w_h^d,0)$ on $\mathcal{T}_h^d$. Then,
  using \cref{eq:inf-sup-bh-aux1,eq:inf-sup-bh-aux2},
  \begin{equation}
    \label{eq:inf-sup-bh-aux3}
    \begin{split}
      \sup_{0 \neq \boldsymbol{v}_h \in \boldsymbol{V}_h}
      \frac{b_h(\boldsymbol{q}_h, \boldsymbol{v}_h)}{\tnorm{\boldsymbol{v}_h}_v}
      & \ge
        \frac{b_h(\boldsymbol{q}_h, \boldsymbol{w}_h)}{\tnorm{\boldsymbol{w}_h}_v}
      \\
      & \ge
        \dfrac{\tnorm{\boldsymbol{q}_h^s}_{q,s}^2 
        + \delta \tnorm{\boldsymbol{q}_h^d}_{q,d}^2
        - \langle\bar{q}_h^d, \bar{w}_h^s \cdot n^d\rangle_{\Gamma^I} }{(c_2^{-2} \tnorm{\boldsymbol{q}_h^{s}}_{q,s}^2
        + c_3^{-2} \delta^2 \tnorm{\boldsymbol{q}_h^d}_{q,d}^2
        + \alpha \kappa^{-1/2} \mu \norm[0]{\bar{w}_h^s}_{\Gamma^I}^2)^{1/2}}.
        \end{split}
    \end{equation}
    By the Cauchy--Schwarz inequality,
    \cref{eq:alpha-kappa-rel,eq:inf-sup-bh-aux1,eq:interface-norm-bound-2},
    and Young's inequality, we find:
    \begin{equation}
      \label{eq:inf-sup-bh-aux4}
      \begin{split}
        \langle \bar{q}_h^d, \bar{w}_h^s \cdot n^d \rangle_{\Gamma^I}
        & \leq 
          (\alpha^{-1} \kappa^{1/2} \mu^{-1})^{1/2} \norm[0]{\bar{q}_h^d}_{\Gamma^I}
          (\alpha \kappa^{-1/2} \mu)^{1/2} \norm[0]{\bar{w}_h^s}_{\Gamma^I}
        \\
        & \leq 
          \tfrac{1}{2} c_2^{-2} c_{\Gamma}^2 c_{\alpha} \alpha^{-1} \kappa^{1/2} \mu^{-1} \norm[0]{\bar{q}_h^d}_{\Gamma^I}^2
          + \tfrac{1}{4} \tnorm{\boldsymbol{q}_h^s}_{q,s}^2.        
      \end{split}
    \end{equation}
    Using \cref{eq:interface-norm-bound-4,eq:alpha-kappa-rel} we
    furthermore find that
    \begin{equation}
      \label{eq:inf-sup-bh-aux5}
      \tfrac{1}{2} c_2^{-2} c_{\Gamma}^2 c_{\alpha} \alpha^{-1} \kappa^{1/2} \mu^{-1} \norm[0]{\bar{q}_h^d}_{\Gamma^{I}}^2
      \le \tfrac{1}{2} c_2^{-2} c_{\Gamma}^4 c_{\alpha}^2 \alpha^{-2} \tnorm{\boldsymbol{q}_h^d}_{q,d}^2,
    \end{equation}
    and using
    \cref{eq:alpha-kappa-rel,eq:interface-norm-bound-2,eq:inf-sup-bh-aux1}
    we find that
    \begin{equation}
      \label{eq:inf-sup-bh-aux6}
      \alpha \kappa^{-1/2} \mu \norm[0]{\bar{w}_h^s}_{\Gamma^I}^2
      \le \tfrac{1}{2} c_{\Gamma}^2 c_{\alpha} c_2^{-2}\tnorm{\boldsymbol{q}_h^s}_{q,s}^2.
    \end{equation}
    Therefore, choosing
    $\delta = c_{\alpha}^2 c_2^{-2} c_{\Gamma}^4 \alpha^{-2}$, and
    combining
    \cref{eq:inf-sup-bh-aux3,eq:inf-sup-bh-aux4,eq:inf-sup-bh-aux5,eq:inf-sup-bh-aux6}, we find
    \begin{align*}
        \sup_{0 \neq \boldsymbol{v}_h \in \boldsymbol{V}_h}
        \frac{b_h(\boldsymbol{q}_h, \boldsymbol{v}_h)}{\tnorm{\boldsymbol{v}_h}_v}
        & \ge \dfrac{\frac{3}{4} \tnorm{\boldsymbol{q}_h^s}_{q,s}^2
        + \delta \tnorm{\boldsymbol{q}_h^d }_{q,d}^2
        - \frac{1}{2} c_2^{-2} c_{\Gamma}^2 c_{\alpha} \alpha^{-1} \kappa^{1/2} \mu^{-1} \norm[0]{\bar{q}_h^d }_{\Gamma^I}^2 }{
        (( c_2^{-2} + \frac{1}{2} c_{\Gamma}^2 c_{\alpha} c_{2}^{-2} )\tnorm{\boldsymbol{q}_h^s}_{q,s}^2
        + c_3^{-2} \delta^2 \tnorm{\boldsymbol{q}_h^d}_{q,d}^2 )
        } \\
        & \ge \dfrac{\min( \frac{3}{4}, \frac{1}{2} c_2^{-2} c_{\Gamma}^4 c_{\alpha}^2 \alpha^{-2}) }{\max(c_2^{-2} + \frac{1}{2} c_{\Gamma}^2 c_{\alpha} c_2^{-2}, c_3^{-2} c_2^{-4} c_{\Gamma}^8 c_{\alpha}^4 \alpha^{-4} ) } (\tnorm{\boldsymbol{q}_h^s}_{q,s}^2 + \tnorm{\boldsymbol{q}_h^d}_{q,d}^2)^{1/2}.
    \end{align*}
    Using \cref{eq:inf-sup-bh-aux5}, we note that
    \begin{align*}
      \tnorm{\boldsymbol{q}_h^s}_{q,s}^2
      + \tnorm{\boldsymbol{q}_h^d}_{q,d}^2
      &= \tnorm{\boldsymbol{q}_h^s}_{q,s}^2
        + \tfrac{1}{2}\tnorm{\boldsymbol{q}_h^d}_{q,d}^2
        + \tfrac{1}{2}\tnorm{\boldsymbol{q}_h^d}_{q,d}^2
      \\
      &\ge \tnorm{\boldsymbol{q}_h^s}_{q,s}^2
        + \tfrac{1}{2}\tnorm{\boldsymbol{q}_h^d}_{q,d}^2
        + \tfrac{1}{2} c_{\alpha}^{-1} c_{\Gamma}^{-2} \alpha^2 (\alpha \kappa^{-1/2} \mu)^{-1} \norm[0]{\bar{q}_h^d}_{\Gamma^{I}}^2
      \\
      &\ge \tfrac{1}{2} \min(1, c_{\alpha}^{-1} c_{\Gamma}^{-2} \alpha^2) \tnorm{\boldsymbol{q}_h}_q^2,
    \end{align*}
    and so \cref{eq:inf-sup-bh} follows. \qed
\end{proof}

We conclude this section with the following theorem on uniform
well-posedness. 

\begin{theorem}[Uniform well-posedness]
  \label{thm:uniformwellposed}
  The HDG discretization \cref{eq:hdg-discretization} is uniformly
  well-posed in the norms induced by the inner products defined in
  \cref{eq:inner-product}.
\end{theorem}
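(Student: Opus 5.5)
The plan is to invoke the Brezzi theory for saddle point problems in its uniform form, as used in the operator-preconditioning framework \cite{mardal2011preconditioning}. The bilinear form $a_h$ has the saddle point structure $d_h(\boldsymbol{u}_h,\boldsymbol{v}_h) + b_h(\boldsymbol{p}_h,\boldsymbol{v}_h) + b_h(\boldsymbol{q}_h,\boldsymbol{u}_h)$ on $\boldsymbol{V}_h\times\boldsymbol{Q}_h$. Uniform well-posedness then amounts to a uniform inf-sup condition for $a_h$ on $\boldsymbol{X}_h$ with respect to $\tnorm{\cdot}_{\boldsymbol{X}_h}$. This condition, together with \cref{lem:boundedness}, yields a solution operator whose norm, and that of its inverse, are bounded by uniform constants. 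Boundedness is already available from \cref{lem:boundedness}, with constant $c_b$. It therefore remains to verify the Brezzi conditions with uniform constants.

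First, coercivity of $d_h$. I would use \cref{eq:coercivity-dh} and observe that its right-hand side is exactly $c_1\tnorm{\boldsymbol{v}_h}_v^2$, by the definition of $(\cdot,\cdot)_v$. Hence $d_h$ is uniformly coercive on all of $\boldsymbol{V}_h$, which is stronger than coercivity on the kernel of $b_h$. Second, the inf-sup condition for $b_h$ is \cref{lem:inf-sup-bh}. The statement there should be read with the norm $\tnorm{\boldsymbol{q}_h}_q$ (not squared) on the right, as the final line of its proof shows. Boundedness of $d_h$ and of $b_h$ individually, with uniform constants, follows from \cref{eq:boundedness-aux1,eq:boundedness-aux2,eq:boundedness-aux3,eq:boundedness-aux4}.

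With these ingredients I would apply Brezzi's theorem, or carry out the standard construction explicitly to keep the constants visible. Given $\boldsymbol{x}_h=(\boldsymbol{u}_h,\boldsymbol{p}_h)$, choose $\boldsymbol{w}_h$ from \cref{lem:inf-sup-bh} with $b_h(\boldsymbol{p}_h,\boldsymbol{w}_h)\ge c_s\tnorm{\boldsymbol{p}_h}_q\tnorm{\boldsymbol{w}_h}_v$, normalized so that $\tnorm{\boldsymbol{w}_h}_v=\tnorm{\boldsymbol{p}_h}_q$. Then test with
\[
\boldsymbol{y}_h=(\boldsymbol{u}_h+\gamma\boldsymbol{w}_h,\,-\boldsymbol{p}_h).
\]
This gives
\[
a_h(\boldsymbol{x}_h,\boldsymbol{y}_h)=d_h(\boldsymbol{u}_h,\boldsymbol{u}_h)+\gamma d_h(\boldsymbol{u}_h,\boldsymbol{w}_h)+\gamma b_h(\boldsymbol{p}_h,\boldsymbol{w}_h).
\]
Young's inequality with the bound from \cref{eq:boundedness-aux1} then gives a lower bound of $\tfrac{c_1}{2}\tnorm{\boldsymbol{u}_h}_v^2+\tfrac{\gamma c_s}{2}\tnorm{\boldsymbol{p}_h}_q^2$ for a uniform choice such as $\gamma=c_1c_s/c^2$. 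Combined with $\tnorm{\boldsymbol{y}_h}_{\boldsymbol{X}_h}\lesssim\tnorm{\boldsymbol{x}_h}_{\boldsymbol{X}_h}$, this proves the inf-sup condition for $a_h$. Symmetry of $a_h$ gives the transposed condition, so existence and uniqueness follow in finite dimensions.

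The main obstacle is confirming that every constant is independent of $\mu$, $\kappa$ and $h$. In particular, the constant from \cref{lem:inf-sup-bh} involves $\alpha$ through $\delta$. I would handle this by noting that $\alpha$ is a fixed dimensionless constant, with only \cref{eq:alpha-kappa-rel} linking it to $\kappa$. The constants are therefore uniform in the sense defined at the start of this section, namely independent of $\mu$, $\kappa$ and $h$.
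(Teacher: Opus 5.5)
Your proposal is correct and follows the paper's route: the paper's proof simply invokes the Brezzi conditions through \cref{lem:boundedness} and \cref{lem:inf-sup-bh}, leaving implicit the coercivity \cref{eq:coercivity-dh} of $d_h$ on all of $\boldsymbol{V}_h$. Your test function $(\boldsymbol{u}_h+\gamma\boldsymbol{w}_h,-\boldsymbol{p}_h)$ just writes out that standard Brezzi argument with explicit uniform constants. You are also right that the right-hand side of \cref{eq:inf-sup-bh} must be read as $c_s\tnorm{\boldsymbol{q}_h}_q$, not its square.
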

\begin{proof}
  The Brezzi conditions (see \cite[Chapter 4]{boffi2013mixed}) are 
  satisfied by invoking \cref{lem:boundedness,lem:inf-sup-bh}. 
  Consequently, the statement holds. \qed
\end{proof}

\section{Preconditioning}
\label{sec:preconditioning-framework}

One of the main features of hybridizable discretizations is the ease
at which it is possible to eliminate local degrees of freedom. In this
section we will determine a preconditioner for the reduced problem
obtained after eliminating these local degrees of freedom. In
\cref{ss:mardal-winther} we first summarize the theoretical framework
introduced in \cite{mardal2011preconditioning} to construct
parameter-robust preconditioners for symmetric discretizations. Its
extension in \cite{henriquez2025parameter} to hybridizable
discretizations to construct parameter-robust preconditioners for the
reduced problem is summarized in \cref{ss:Static-condensation}. In the
remainder of this section we then use this framework to construct a
parameter-robust preconditioner for the reduced problem obtained from
the HDG discretization of the Stokes--Darcy problem
\cref{eq:hdg-discretization}.

\subsection{Operator preconditioning framework}
\label{ss:mardal-winther}

We begin by considering an arbitrary finite element space
$\boldsymbol{X}_h$ depending on a mesh size $h$. We denote by
$(\cdot, \cdot)_{\boldsymbol{X}_h}$ an inner product on
$\boldsymbol{X}_h$ and by $\norm[0]{\cdot}_{\boldsymbol{X}_h}$ the
norm induced by this inner product. Additionally, we denote the
duality pairing of $\boldsymbol{X}_h^*$ and $\boldsymbol{X}_h$ by
$\langle\cdot, \cdot\rangle_{\boldsymbol{X}_h^*, \boldsymbol{X}_h}$,
where $\boldsymbol{X}_h^*$ is the dual space of
$\boldsymbol{X}_h$. 

Consider the problem of finding
$\boldsymbol{x}_h \in \boldsymbol{X}_h$ such that
\begin{equation}
  \label{eq:general-probl}
  a_h(\boldsymbol{x}_h, \boldsymbol{y}_h) 
  = \langle\boldsymbol{f}_h, \boldsymbol{y}_h\rangle_{\boldsymbol{X}_h^*, \boldsymbol{X}_h}
  \quad \forall \boldsymbol{y}_h \in \boldsymbol{X}_h,
\end{equation}
where the bilinear form $a_h(\cdot,\cdot)$ defined on
$\boldsymbol{X}_h\times\boldsymbol{X}_h$ is symmetric and
$h$-dependent, and $\boldsymbol{f}_h \in \boldsymbol{X}_h^*$. This
problem is well-posed if it satisfies
\begin{subequations}
  \label{eq:gnral-boundedness}
  \begin{align}
    \label{eq:gnral-boundedness-1}
    a_h(\boldsymbol{x}_h, \boldsymbol{y}_h)
    & \le c_b \norm[0]{\boldsymbol{x}_h}_{\boldsymbol{X}_h}\norm[0]{\boldsymbol{y}_h}_{\boldsymbol{X}_h}
      \quad \forall \boldsymbol{x}_h, \boldsymbol{y}_h \in \boldsymbol{X}_h,
    \\
    \label{eq:gnral-boundedness-2}
    c_i &\le \inf_{\boldsymbol{x}_h \in \boldsymbol{X}_h} \sup_{\boldsymbol{y}_h \in \boldsymbol{X}_h} 
          \dfrac{a_h(\boldsymbol{x}_h, \boldsymbol{y}_h)}{\norm[0]{\boldsymbol{x}_h}_{\boldsymbol{X}_h} \norm[0]{\boldsymbol{y}_h}_{\boldsymbol{X}_h} },
  \end{align}
\end{subequations}
where $c_b$ and $c_i$ are the positive boundedness and stability
constants, respectively.

If we define $A : \boldsymbol{X}_h \to \boldsymbol{X}_h^*$ such that
$\langle A \boldsymbol{x}_h,
\boldsymbol{y}_h\rangle_{\boldsymbol{X}_h^*, \boldsymbol{X}_h} =
a_h(\boldsymbol{x}_h, \boldsymbol{y}_h)$ for all
$\boldsymbol{x}_h, \boldsymbol{y}_h \in \boldsymbol{X}_h$, we can
rewrite \cref{eq:general-probl} as: Find
$\boldsymbol{x}_h \in \boldsymbol{X}_h$ such that
\begin{equation}
  \label{eq:general-probl-2}
  A \boldsymbol{x}_h = \boldsymbol{f}_h \text{ in }\boldsymbol{X}_h^*.
\end{equation}
A preconditioner $P^{-1}: \boldsymbol{X}_h^* \to \boldsymbol{X}_h$ for
problem \cref{eq:general-probl-2} can be defined as
\begin{equation}
  \label{eq:gnral-precond}
  (P^{-1} \boldsymbol{f}_h, \boldsymbol{y}_h)_{\boldsymbol{X}_h} 
  = \langle \boldsymbol{f}_h, \boldsymbol{y}_h\rangle_{\boldsymbol{X}_h^*, \boldsymbol{X}_h}
  \quad\boldsymbol{y}_h \in \boldsymbol{X}_h,\, \boldsymbol{f}_h \in \boldsymbol{X}_h^*.
\end{equation}
Let $\mathcal{L}(\boldsymbol{X}_h, \boldsymbol{X}_h)$ be the set of
bounded linear operators mapping $\boldsymbol{X}_h$ to
$\boldsymbol{X}_h$ and let
\begin{subequations}
  \label{eq:precond-norm}
  \begin{align}
    \label{eq:precond-norm-1}
    \norm[0]{P^{-1}A}_{\mathcal{L}(\boldsymbol{X}_h, \boldsymbol{X}_h)}
    & = \sup_{\boldsymbol{x}_h, \boldsymbol{y}_h} \dfrac{(P^{-1}A \boldsymbol{x}_h, \boldsymbol{y}_h)_{\boldsymbol{X}_h}}{\norm[0]{\boldsymbol{x}_h}_{\boldsymbol{X}_h} \norm[0]{\boldsymbol{y}_h}_{\boldsymbol{X}_h}},
    \\
    \label{eq:precond-norm-2}
    \norm[0]{(P^{-1}A)^{-1}}_{\mathcal{L}(\boldsymbol{X}_h, \boldsymbol{X}_h)}^{-1}
    & = \inf_{\boldsymbol{x}_h \in \boldsymbol{X}_h} 
      \sup_{\boldsymbol{y}_h \in \boldsymbol{X}_h}
      \dfrac{(P^{-1}A \boldsymbol{x}_h, \boldsymbol{y}_h)_{\boldsymbol{X}_h}}{\norm[0]{\boldsymbol{x}_h}_{\boldsymbol{X}_h} \norm[0]{\boldsymbol{y}_h}_{\boldsymbol{X}_h}}.
  \end{align}
\end{subequations}
An estimate for the condition number $\text{cond}(P^{-1}A)$ is given
by
\begin{equation*}
  \text{cond}(P^{-1}A) 
  = \norm[0]{P^{-1}A}_{\mathcal{L}(\boldsymbol{X}_h, \boldsymbol{X}_h)}
  \norm[0]{(P^{-1}A)^{-1}}_{\mathcal{L}(\boldsymbol{X}_h, \boldsymbol{X}_h)}^{-1}
  \le c_b/c_i.
\end{equation*}
If the constants $c_b$ and $c_i$ are independent of model parameters
and mesh-size $h$, then $P^{-1}$ is a parameter-robust preconditioner.

\subsection{Hybridizable preconditioning framework}
\label{ss:Static-condensation}

Assume $\boldsymbol{X}_h := X_h \times \bar{X}_h$ and
$\boldsymbol{X}_h \ni \boldsymbol{x}_h = (x_h, \bar{x}_h)$ in which
$x_h$ represent the local degrees of freedom. We then write
\cref{eq:general-probl-2} as
\begin{equation}
  \label{eq:abstract-Ax=F-2}
  \underbrace{\begin{bmatrix}
    A_{11} & A_{21}^T \\
    A_{21} & A_{22}    
  \end{bmatrix}}_{=: A}
\begin{bmatrix}
  x_h \\ \bar{x}_h
\end{bmatrix}
=
\begin{bmatrix}
  f_h \\ \bar{f}_h
\end{bmatrix},
\end{equation}
in which $A_{11} : X_h \to X_h^*$, $A_{21} : X_h \to \bar{X}_h^*$, and
$A_{22} : \bar{X}_h \to \bar{X}_h^*$ are the operators that form
$A: \boldsymbol{X}_h \to \boldsymbol{X}_h^*$, and
$\boldsymbol{f}_h = (f_h, \bar{f}_h) \in X_h^*\times
\bar{X}_h^*$. Eliminating the local degrees of freedom $x_h$, the
reduced problem is given by
\begin{equation}
  \label{eq:gen-cond-syst}
  \underbrace{(A_{22} - A_{21} A_{11}^{-1} A_{21}^T)}_{=:S_A} \bar{x}_h
  = \bar{f}_h - A_{21} A_{11}^{-1} f_h.
\end{equation}
We remark that $S_{A}$ is the Schur complement of the matrix $A$ of
the original system. The local degrees of freedom can be recovered by
solving
\begin{equation*}
  x_h = A_{11}^{-1} f_h - A_{11}^{-1} A_{21}^{T} \bar{x}_h.
\end{equation*}
Assume that preconditioner $P$, defined by \cref{eq:gnral-precond},
has the same block structure as $A$ in \cref{eq:abstract-Ax=F-2}:
\begin{equation}
  \label{eq:gnral-precond-matrix}
  P =
  \begin{bmatrix}
    P_{11} & P_{21}^T \\
    P_{21} & P_{22}    
  \end{bmatrix},
\end{equation}
where $P_{11}: X_h \to X_h^*$, $P_{21}: X_h \to \bar{X}_h^*$,
$P_{22}: \bar{X}_h \to \bar{X}_h^*$, and with Schur complement
\begin{equation}
  \label{eq:gnral-precond-schur}
  S_P:= P_{22} - P_{21}P_{11}^{-1}P_{21}^T.
\end{equation}
Assume the Schur complement is a symmetric and positive operator in
the sense that
$\langle S_P \bar{x}_h, \bar{x}_h\rangle_{\bar{X}_h^*, \bar{X}_h} > 0$
for all $\bar{x}_h \in \bar{X}_h \backslash \{0\}$. The operator
$S_P:\bar{X}_h \to \bar{X}_h^*$ therefore defines an inner product on
the space $\bar{X}_h$ as follows:
\begin{equation}
  \label{eq:SPinnerproduct}
  \langle S_P \bar{x}_h, \bar{y}_h\rangle_{\bar{X}_h^*, \bar{X}_h}
  = (\bar{x}_h, \bar{y}_h)_{\bar{X}_h}
  \quad \forall \bar{x}_h, \bar{y}_h \in \bar{X}_h.
\end{equation}
The norm induced by $(\cdot, \cdot)_{\bar{X}_h}$ is denoted by
$\norm[0]{\cdot}_{\bar{X}_h}$. The idea behind the hybridizable
preconditioning framework is to use $S_P$, the Schur complement of the
block-matrix representation of the bilinear form associated with the
inner product $(\cdot, \cdot)_{\boldsymbol{X}_h}$, i.e., of $P$, as a
preconditioner for $S_A$, the reduced system obtained after applying
static condensation to \cref{eq:abstract-Ax=F-2}. Assuming that
$A_{11}$ is invertible and that $P_{11}$ is a positive operator, we
proved in \cite[Theorem 2.3]{henriquez2025parameter} that $S_P$ is a
parameter-robust preconditioner for $S_A$ if and only if there exists
a uniform constant $c_l$ such that
\begin{equation}
  \label{eq:robust-schur-precon-1}
  \norm[0]{(-A_{11}^{-1}A_{21}^T \bar{x}_h, \bar{x}_h)}_{\boldsymbol{X}_h}
  \leq c_l \norm[0]{\bar{x}_h}_{\bar{X}_h}
  \quad \forall \bar{x}_h \in \bar{X}_h.
\end{equation}

\subsection{Preconditioners}
\label{ss:preconds}

We start this section by defining the inner products
\begin{subequations}
  \label{eq:inner-prod-redef}
  \begin{align}
    \label{eq:inner-prod-redef-a}
    (\boldsymbol{u}_h^s, \boldsymbol{v}_h^s)_{v,s'}
    & := (\boldsymbol{u}_h^s, \boldsymbol{v}_h^s)_{v,s} 
      + \alpha \mu \kappa^{-1/2} \langle \bar{u}_h^t, \bar{v}_h^t\rangle_{\Gamma^I}
    && \forall \boldsymbol{u}_h^s, \boldsymbol{v}_h^s \in \boldsymbol{V}_h^s,
    \\
    \label{eq:inner-prod-redef-b}
    (\boldsymbol{p}_h^d, \boldsymbol{q}_h^d)_{q,d'}
    & := (\boldsymbol{p}_h^d, \boldsymbol{q}_h^d)_{q,d}
      + \alpha^{-1} \mu^{-1} \kappa^{1/2}  \langle \bar{p}_h^d, \bar{q}_h^d \rangle_{\Gamma^I}
    && \forall \boldsymbol{p}_h^d, \boldsymbol{q}_h^d \in \boldsymbol{Q}_h^d,
  \end{align}
\end{subequations}
and the operators
$P^{u_s} : \boldsymbol{V}_h^s \to \boldsymbol{V}_h^{s,*}$,
$P^{u_d} : V_h^d \to V_h^{d,*}$,
$P^{p_s} : \boldsymbol{Q}_h^s \to \boldsymbol{Q}_h^{s,*}$, and
$P^{p_d} : \boldsymbol{Q}_h^d \to \boldsymbol{Q}_h^{d,*}$ such that
\begin{equation}
  \label{eq:preconditionerPSD}
  \begin{split}
    \langle P \boldsymbol{x}_h, \boldsymbol{y}_h \rangle_{\boldsymbol{X}_h^*, \boldsymbol{X}_h}
    =& (\boldsymbol{u}_h^s, \boldsymbol{v}_h^s)_{v,s'} 
       + (u_h^d, v_h^d)_{v,d}
       + (\boldsymbol{p}_h^s, \boldsymbol{q}_h^s)_{q,s}
       + (\boldsymbol{p}_h^d, \boldsymbol{q}_h^d)_{q,d'}
    \\
    =& \langle P^{u_s} \boldsymbol{u}_h^s, \boldsymbol{v}_h^s \rangle_{\boldsymbol{V}_h^{s,*}, \boldsymbol{V}_h^s}
       + \langle P^{u_d} u_h^d, v_h^d \rangle_{V_h^{d,*}, V_h^d}
    \\
     & + \langle P^{p_s} \boldsymbol{p}_h^s, \boldsymbol{q}_h^s \rangle_{\boldsymbol{Q}_h^{s,*}, \boldsymbol{Q}_h^{s}}
       + \langle P^{p_d} \boldsymbol{p}_h^d, \boldsymbol{q}_h^d \rangle_{\boldsymbol{Q}_h^{d,*}, \boldsymbol{Q}_h^{d}}.    
  \end{split}
\end{equation}
We have the following block form representation of $P$:
\begin{equation*}
  P
  =
  \begin{bmatrix}
    P^{u_s} & 0 & 0 & 0
    \\
    0 & P^{u_d} & 0 & 0
    \\
    0 & 0 & P^{p_s} & 0
    \\
    0 & 0 & 0 & P^{p_d}
  \end{bmatrix}
  =
  \begin{bmatrix}
    P_{11}^{u_s} & (P_{21}^{u_s})^T & 0 & 0 & 0 & 0 & 0
    \\
    P_{21}^{u_s} & P_{22}^{u_s} & 0 & 0 & 0 & 0 & 0
    \\
    0 & 0 & P^{u_d} & 0 & 0 & 0 & 0
    \\
    0 & 0 & 0 & P_{11}^{p_s} & 0 & 0 & 0
    \\
    0 & 0 & 0 & 0 & P_{22}^{p_s} & 0 & 0
    \\
    0 & 0 & 0 & 0 & 0 & P_{11}^{p_d} & (P_{21}^{p_d})^T
    \\
    0 & 0 & 0 & 0 & 0 & P_{21}^{p_s} & P_{22}^{p_d}
  \end{bmatrix},
\end{equation*}
where $P_{11}^{u_s} : V_h^s \to V_h^{s,*} $,
$P_{21}^{u_s} : V_h^s \to \bar{V}_h^{*} $,
$P_{22}^{u_s} : \bar{V}_h \to \bar{V}_h^{*}$,
$P_{11}^{p_s} : Q_h^s \to Q_h^{s,*}$,
$P_{22}^{p_s} : \bar{Q}_h^s \to \bar{Q}_h^{s,*}$,
$P_{11}^{p_d} : Q_h^d \to Q_h^{d,*}$, and
$P_{21}^{p_d} : Q_h^d \to \bar{Q}_h^{d,*}$,
$P_{22}^{p_d} : \bar{Q}_h^d \to \bar{Q}_h^{d,*}$. By the uniform
well-posedness analysis of \cref{s:well-posedness} and the discussion
in \cref{ss:mardal-winther} we note that the operator $P$ defines a
parameter robust preconditioner for the HDG discretization of the
coupled Stokes--Darcy problem \cref{eq:hdg-discretization}
\emph{before} static condensation. We furthermore claim that the Schur
complement of $P$,
$S_P : (\bar{V}_h \times \bar{Q}_h^s \times \bar{Q}_h^d) \to
(\bar{V}_h^* \times \bar{Q}_h^{s,*} \times \bar{Q}_h^{d,*})$ is a
parameter-robust preconditioner for the reduced form of
\cref{eq:hdg-discretization}. To prove this, we need to verify
\cref{eq:robust-schur-precon-1}. We do this in
\cref{ss:condensed-preconditioner}.

To end this section, let us note that $S_P$ can be written as
\begin{equation}
  \label{eq:Sp}
  S_P
  =
  \begin{bmatrix}
    S_{P^{u_s}} & 0 & 0
    \\
    0 & S_{P^{p_s}} & 0
    \\
    0 & 0 & S_{P^{p_d}} 
  \end{bmatrix},
\end{equation}
where
$S_{P^{u_s}} = P_{22}^{u_s} -
P_{21}^{u_s}(P_{11}^{u_s})^{-1}(P_{21}^{u_s})^T$,
$S_{P^{p_s}} = P_{22}^{p_s} $, and
$S_{P^{p_d}} = P_{22}^{p_d} -
P_{21}^{p_d}(P_{11}^{p_d})^{-1}(P_{21}^{p_d})^T$. Furthermore, we
introduce the following inner products on the facet spaces:
\begin{equation}
  \label{eq:face-inner-prod}
  (\bar{u}_h, \bar{v}_h)_{\bar{v},s} 
  := \langle S_{P^{u_s}} \bar{u}_h, \bar{v}_h\rangle_{\bar{V}_h^*, \bar{V}_h},
  \qquad
  (\bar{p}_h^d, \bar{q}_h^d)_{\bar{q}, d} 
  := \langle S_{P^{p_d}} \bar{q}_h^d, \bar{q}_h^d\rangle_{\bar{Q}_h^{d,*}, \bar{Q}_h^{d}},
\end{equation}
and their induced norms $\tnorm{\cdot}_{\bar{v}, s}$ and
$\tnorm{\cdot}_{\bar{q}, d}$.

\subsection{Auxiliary problems}
\label{sec:aux-problem}

We introduce two auxiliary problems that will be used in the analysis
of preconditioner $S_P$.

\subsubsection{Auxiliary problem for the velocity field in $\Omega^s$} 
\label{sec:aux-problem-velocity}

We consider the following vector diffusion problem:
\begin{equation*}
  - \nabla \cdot 2 \mu \varepsilon(u) = f \text{ in } \Omega^s,
  \quad 2 \mu \varepsilon (u) n = 0 \text{ on } \Gamma^I, 
  \quad u = 0 \text{ on } \Gamma^s.
\end{equation*}
For the discretization of this problem, we use the HDG discretization
presented in \cite{wells2011analysis}: Given $f \in [L^2(\Omega)]^d$
find $\boldsymbol{u}_h \in \boldsymbol{V}_h^s$ such that
\begin{equation}
  \label{eq:aux-u-problem}
  d_h^s(\boldsymbol{u}_h, \boldsymbol{v}_h) = (f, v_h)_{\mathcal{T}_h}
  \quad \forall \boldsymbol{v}_h \in \boldsymbol{V}_h^s.
\end{equation}
We have the following result.

\begin{lemma}
  \label{lem:coer-bound-dhs}
  There exist uniform constants $c_1^s, c_2^s$ such that for all
  $\boldsymbol{v}_h \in \boldsymbol{V}_h^s$,
  \begin{equation}
    \label{eq:coer-bound-dhs}
    c_1^s \big(
    \tnorm{\boldsymbol{v}_h}_{v,s}^2
    + \alpha \mu \kappa^{-1/2} \norm[0]{\bar{v}_h^t}_{\Gamma^I}^2
    \big)
    \leq 
    d_h^s(\boldsymbol{v}_h, \boldsymbol{v}_h)
    \leq 
    c_2^s \big(
    \tnorm{\boldsymbol{v}_h}_{v,s}^2
    + \alpha \mu \kappa^{-1/2} \norm[0]{\bar{v}_h^t}_{\Gamma^I}^2
    \big).
  \end{equation}
\end{lemma}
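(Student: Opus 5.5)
The plan is to reduce \cref{eq:coer-bound-dhs} to two separate facts. The first is that $d_h^s(\boldsymbol{v}_h,\boldsymbol{v}_h)$ is uniformly equivalent to $\tnorm{\boldsymbol{v}_h}_{v,s}^2$ for $\eta$ sufficiently large. The second is that the interface term $\alpha\mu\kappa^{-1/2}\norm[0]{\bar{v}_h^t}_{\Gamma^I}^2$ is uniformly dominated by $\tnorm{\boldsymbol{v}_h}_{v,s}^2$. Once both are available, the lower bound follows from $\tnorm{\boldsymbol{v}_h}_{v,s}^2 + \alpha\mu\kappa^{-1/2}\norm[0]{\bar{v}_h^t}_{\Gamma^I}^2 \le c\,\tnorm{\boldsymbol{v}_h}_{v,s}^2 \le c\,d_h^s(\boldsymbol{v}_h,\boldsymbol{v}_h)$. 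The upper bound is immediate, because the interface term is nonnegative and can simply be added to the right-hand side.

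For the first fact, I would expand
\[
d_h^s(\boldsymbol{v}_h,\boldsymbol{v}_h) = \tnorm{\boldsymbol{v}_h}_{v,s}^2 - 4\mu\langle \varepsilon(v_h)n^s, v_h-\bar{v}_h\rangle_{\partial\mathcal{T}_h^s}
\]
and treat only the consistency term. By Cauchy--Schwarz on each face and the discrete trace inequality $\norm[0]{h_K^{1/2}\varepsilon(v_h)n}_{\partial K}\le c_{tr}\norm[0]{\varepsilon(v_h)}_K$, which holds by shape regularity and because $\varepsilon(v_h)$ is piecewise polynomial, we get
\[
4\mu|\langle \varepsilon(v_h)n^s, v_h-\bar{v}_h\rangle_{\partial\mathcal{T}_h^s}| \le 4\mu c_{tr}\norm[0]{\varepsilon(v_h)}_{\mathcal{T}_h^s}\norm[0]{h_K^{-1/2}(v_h-\bar{v}_h)}_{\partial\mathcal{T}_h^s}.
\]
Young's inequality with a parameter then bounds this term by $\mu\norm[0]{\varepsilon(v_h)}^2_{\mathcal{T}_h^s} + 4\mu c_{tr}^2\norm[0]{h_K^{-1/2}(v_h-\bar{v}_h)}^2_{\partial\mathcal{T}_h^s}$. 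Choosing $\eta$ large relative to $c_{tr}^2$ (e.g.\ $\eta > 4c_{tr}^2$, which is the role of ``$\eta$ sufficiently large'') gives the lower bound $d_h^s\ge c\,\tnorm{\boldsymbol{v}_h}_{v,s}^2$. The same estimate gives the upper bound $d_h^s\le C\,\tnorm{\boldsymbol{v}_h}_{v,s}^2$. All constants depend only on $c_{tr}$ and $\eta$, so they are independent of $\mu$, $\kappa$ and $h$; alternatively one can cite \cite{wells2011analysis,rhebergen2017analysis}.

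For the second fact, I would use \cref{eq:alpha-kappa-rel} to write $\alpha\mu\kappa^{-1/2}\le c_\alpha\mu$. I would then invoke the interface trace bound \cref{eq:interface-norm-bound-2}, already used in the proofs of \cref{lem:boundedness,lem:inf-sup-bh}. In the form $\mu\norm[0]{\bar{v}_h}_{\Gamma^I}^2\le c_\Gamma^2\tnorm{\boldsymbol{v}_h}_{v,s}^2$, it yields
\[
\alpha\mu\kappa^{-1/2}\norm[0]{\bar{v}_h^t}_{\Gamma^I}^2\le \alpha\mu\kappa^{-1/2}\norm[0]{\bar{v}_h}_{\Gamma^I}^2 \le c_\alpha c_\Gamma^2\tnorm{\boldsymbol{v}_h}_{v,s}^2 .
\]
Here I use $|\bar{v}_h^t|\le|\bar{v}_h|$ pointwise.

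The main obstacle is this interface estimate. It is a discrete trace inequality combined with a discrete Korn--Poincaré inequality, and it holds only because $\bar{v}_h=0$ on $\Gamma^s$ (assumed of positive measure). Without that condition, rigid motions would make $\tnorm{\cdot}_{v,s}$ vanish while the trace on $\Gamma^I$ does not. I would take it as given from the earlier appendix result \cref{eq:interface-norm-bound-2}, so the constant $c_1^s$ obtained at the end is of the form $c/(1+c_\alpha c_\Gamma^2)$ and is uniform.
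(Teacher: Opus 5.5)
Your proposal is correct and follows essentially the same route as the paper. Both arguments combine coercivity and boundedness of $d_h^s$ in $\tnorm{\cdot}_{v,s}$ with \cref{eq:interface-norm-bound-2} and \cref{eq:alpha-kappa-rel}, which absorb $\alpha\mu\kappa^{-1/2}\norm[0]{\bar{v}_h^t}_{\Gamma^I}^2$ into $\tnorm{\boldsymbol{v}_h}_{v,s}^2$; the upper bound is trivial because the interface term is nonnegative. The differences are cosmetic: the paper cites \cite{rhebergen2017analysis} for the coercivity and boundedness instead of giving your trace-plus-Young argument, and it uses half of the coercivity bound to control $\mu\norm[0]{\bar{v}_h}_{\Gamma^I}^2$, whereas you bound the sum directly by $(1+c_\alpha c_\Gamma^2)\tnorm{\boldsymbol{v}_h}_{v,s}^2$.
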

\begin{proof}
  By the coercivity result in \cite[Lemma 4.2]{rhebergen2017analysis},
  with uniform constant $c$, and using
  \cref{eq:alpha-kappa-rel,eq:interface-norm-bound-2}, we deduce
    \begin{align*}
      d_h^s(\boldsymbol{v}_h, \boldsymbol{v}_h)
      \geq \dfrac{c}{2} \tnorm{\boldsymbol{v}_h}_{v,s}^2
      + c c_{\Gamma}^{-2} \mu \norm[0]{\bar{v}_h}_{\Gamma^I}^2 
      \geq \min(c/2, c c_{\Gamma}^{-2} c_{\alpha}^{-1})
      \big( \tnorm{\boldsymbol{v}_h}_{v,s}^2
      + \alpha \mu \kappa^{-1/2} \norm[0]{\bar{v}_h}_{\Gamma^I}^2
      \big),
    \end{align*}
    for all $\boldsymbol{v}_h \in \boldsymbol{V}_h^s$. The upper bound
    of \cref{eq:coer-bound-dhs} follows immediately by
    \cite[Lemma 4.3]{rhebergen2017analysis}. \qed
\end{proof}

Next, given $\bar{m}_h \in \bar{V}_h$ and
$z \in [L^2(\Omega^s)]^{\dim}$, we define the function
$\tilde{u}_h^L(\bar{m}_h, z) \in V_h^s$ such that, when restricted to
$K \in \mathcal{T}_h^s$, it satisfies the local problem
\begin{equation}
  \label{eq:local-ds=fs}
  d_h^{s,K}(\tilde{u}_h^{L}, v_h) = f_h^{s,K}(v_h)
  \quad \forall v_h \in V(K) := [\mathbb{P}_k(K)]^d,
\end{equation}
where
\begin{align*}
  d_h^{s,K}(u_h, v_h)
  := d_h^s((u_h,0),(v_h,0))|_K
  \quad \text{and} \quad
  f_h^{s,K}(v_h)
  := (z,v_h)_K - d_h^s((0,\bar{m}_h),(v_h,0))|_K,
\end{align*}
in which $d_h^s(\cdot, \cdot)|_K$ is the restriction of
$d_h^s(\cdot, \cdot)$ to a cell $K$. Given
$f \in [L^2(\Omega^s)]^{\dim}$, let us now define
$\tilde{u}_h^f := u_h^L(0, f)$ and
$\tilde l_u(\bar{v}_h) := u_h^L(\bar{v}_h, 0)$ for all
$\bar{v}_h \in \bar{V}_h$. Eliminating $u_h$ from
\cref{eq:aux-u-problem} we find the following reduced problem for
$\bar{u}_h$:
\begin{equation}
  \label{eq:condensed-auxproblem-u}
  d_h^s((\tilde{l}_u(\bar{u}_h), \bar{u}_h), (\tilde{l}_u(\bar{v}_h), \bar{v}_h))
  = (f, \tilde{l}_u(\bar{v}_h))_{\mathcal{T}_h^s}
  \quad \forall \bar{v}_h \in \bar{V}_h,
\end{equation}
and remark that $(u_h, \bar{u}_h)$, where
$u_h = \tilde{u}_h^f + \tilde{l}_u(\bar{u}_h)$, solves
\cref{eq:aux-u-problem}. (The proof is similar to that of \cite[Lemma
4]{rhebergen2018preconditioning} and so it is omitted.)

We now define the operator
$D^s : \boldsymbol{V}_h \to \boldsymbol{V}_h^*$ by
$\langle D^s \boldsymbol{u}_h, \boldsymbol{v}_h
\rangle_{\boldsymbol{V}_h^*, \boldsymbol{V}_h} =
d_h^s(\boldsymbol{u}_h, \boldsymbol{v}_h)$ for all
$\boldsymbol{u}_h, \boldsymbol{v}_h \in \boldsymbol{V}_h$. This
operator has the block structure
\begin{equation*}
  D^s
  =
  \begin{bmatrix}
    D^s_{11} & (D^{s}_{21})^T \\
    D^s_{21} & D^s_{22}
  \end{bmatrix},
\end{equation*}
where $D^s_{11} : V_h \to V_h^*$, $D^s_{21} : V_h \to \bar{V}_h^*$,
and $D^s_{22} : \bar{V}_h \to \bar{V}_h^*$, and Schur complement
operator $S_{D^s} : \bar{V}_h \to \bar{V}_h^*$ where
$S_{D^s} := D^s_{22} - D^s_{21} D^s_{11} (D^{s}_{21})^T$. 

\subsubsection{Auxiliary problem for the pressure field in $\Omega^d$}
\label{sec:aux-problem-pressure}

Consider the following diffusion problem:
\begin{equation*}
  - \nabla \cdot (\mu^{-1} \kappa \nabla p) = g \text{ in } \Omega^d,
  \quad \nabla p \cdot n = 0 \text{ on } \partial \Omega^d, 
  \quad \int_{\Omega^d} p \dif x = 0,
\end{equation*}
where $g$ is a given source term. An interior penalty HDG
discretization of this problem is given by \cite{wells2011analysis}:
Given $g \in L^2(\Omega^d)$, find
$\boldsymbol{p}_h \in \boldsymbol{Q}_0^d := (Q_h^d \cap
L_0^2(\Omega^d))\times \bar{Q}_h^d$, such that
\begin{equation}
  \label{eq:aux-p-problem}
  \Tilde{d}_h(\boldsymbol{p}_h, \boldsymbol{q}_h)
  = (g, q_h)_{\mathcal{T}_h^d} 
  \quad \forall \boldsymbol{q}_h \in \boldsymbol{Q}_0^d,
\end{equation}
where
\begin{equation}
  \label{eq:tilde-a-def}
  \begin{split}
    \Tilde{d}_h(\boldsymbol{p}_h, \boldsymbol{q}_h)
    :=& \mu^{-1} \kappa (\nabla p_h, \nabla q_h)_{\mathcal{T}_h^d}
        + \mu^{-1} \kappa \eta \langle h_K^{-1} (p_h - \bar{p}_h), q_h - \bar{q}_h \rangle_{\partial \mathcal{T}_h^d}
    \\
      & - \mu^{-1} \kappa \langle \nabla p_h \cdot n, q_h - \bar{q}_h \rangle_{\partial \mathcal{T}_h^d}
        - \mu^{-1} \kappa \langle \nabla q_h \cdot n, p_h - \bar{p}_h \rangle_{\partial \mathcal{T}_h^d}.    
  \end{split}
\end{equation}
We have the following result.

\begin{lemma}
  There exist uniform constants $\Tilde{c}_1'$, $\Tilde{c}_2'$,
  $\tilde{c}_1$, and $\tilde{c}_2$ such that for all
  $ \boldsymbol{q}_h \in \boldsymbol{Q}_0^d$:
  \begin{subequations}
    \begin{align}
      \label{eq:coer-bound-tilde-a}
      \Tilde{c}_1' \tnorm{\boldsymbol{q}_h}_{q,d}^2
      &\leq \Tilde{d}_h(\boldsymbol{q}_h, \boldsymbol{q}_h)
      \leq \Tilde{c}_2' \tnorm{\boldsymbol{q}_h}_{q,d}^2,
      \\
      \label{eq:coer-bound-tilde-a2}
      \tilde{c}_1 
      \big( \tnorm{\boldsymbol{q}_h}_{q,d}^2
      + \alpha^{-1} \kappa^{1/2} \mu^{-1} \norm[0]{\bar{q}_h^d}_{\Gamma^I}^2 \big)
      &\leq 
        \tilde{d}_h(\boldsymbol{q}_h, \boldsymbol{q}_h)
        \leq \tilde{c}_2 
        \big( \tnorm{\boldsymbol{q}_h}_{q,d}^2
        + \alpha^{-1} \kappa^{1/2} \mu^{-1} \norm[0]{\bar{q}_h^d}_{\Gamma^I}^2 \big).
    \end{align}
  \end{subequations}
\end{lemma}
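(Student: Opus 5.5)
Both estimates should follow from the standard coercivity and boundedness of interior-penalty HDG forms, combined with the interface trace bound \cref{eq:interface-norm-bound-4} and the assumption \cref{eq:alpha-kappa-rel}. Throughout, the factor $\mu^{-1}\kappa$ multiplies every term of $\tilde{d}_h(\cdot,\cdot)$ and of $(\cdot,\cdot)_{q,d}$. It can therefore be factored out, and it suffices to prove the estimates for the unweighted forms with constants depending only on $\eta$, $k$ and shape regularity.

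\emph{Proof of \cref{eq:coer-bound-tilde-a}.} For the upper bound, I would treat the two consistency terms $-\mu^{-1}\kappa\langle \nabla q_h\cdot n, q_h-\bar{q}_h\rangle_{\partial\mathcal{T}_h^d}$ with the Cauchy--Schwarz inequality and the discrete trace inequality $\norm[0]{h_K^{1/2}\nabla q_h\cdot n}_{\partial K}\le c_{tr}\norm[0]{\nabla q_h}_K$. This gives
\[
2\mu^{-1}\kappa\,|\langle \nabla q_h\cdot n, q_h-\bar{q}_h\rangle_{\partial\mathcal{T}_h^d}| \le 2 c_{tr}\eta^{-1/2}\tnorm{\boldsymbol{q}_h}_{q,d}^2 .
\]
Hence $\tilde{d}_h(\boldsymbol{q}_h,\boldsymbol{q}_h)\le (1+2c_{tr}\eta^{-1/2})\tnorm{\boldsymbol{q}_h}_{q,d}^2$.

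For the lower bound, Young's inequality gives
\[
2\,|\langle \nabla q_h\cdot n, q_h-\bar{q}_h\rangle| \le \tfrac12\norm[0]{\nabla q_h}^2 + 2c_{tr}^2\norm[0]{h_K^{-1/2}(q_h-\bar{q}_h)}^2 .
\]
Therefore $\tilde{d}_h(\boldsymbol{q}_h,\boldsymbol{q}_h)\ge \min(\tfrac12, 1-2c_{tr}^2/\eta)\tnorm{\boldsymbol{q}_h}_{q,d}^2$, which is positive for $\eta$ sufficiently large, as assumed in the definition of the method. This is exactly the argument of \cite{wells2011analysis}, and I would simply cite it. Note that \cref{eq:coer-bound-tilde-a} does not even need $q_h$ to have zero mean.

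\emph{Proof of \cref{eq:coer-bound-tilde-a2}.} Given \cref{eq:coer-bound-tilde-a}, it suffices to show that the added interface term is dominated by the $q,d$-norm:
\[
\alpha^{-1}\kappa^{1/2}\mu^{-1}\norm[0]{\bar{q}_h^d}_{\Gamma^I}^2 \le c\,\tnorm{\boldsymbol{q}_h}_{q,d}^2 \quad \forall \boldsymbol{q}_h\in\boldsymbol{Q}_0^d .
\]
The lower bound then holds with $\tilde c_1=\tilde c_1'/(1+c)$ and the upper bound with $\tilde c_2=\tilde c_2'$. This is precisely the estimate used earlier in \cref{eq:inf-sup-bh-aux5}. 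By \cref{eq:interface-norm-bound-4},
\[
\mu^{-1}\kappa\norm[0]{\bar{q}_h^d}_{\Gamma^I}^2\le c_\Gamma^2\tnorm{\boldsymbol{q}_h}_{q,d}^2
\]
for mean-zero $q_h$; this is where the restriction to $\boldsymbol{Q}_0^d$ enters. It rests on a discrete Poincaré inequality for the HDG broken $H^1$ seminorm together with a trace inequality on $\Gamma^I$ applied to $\bar{q}_h$. Multiplying by $\alpha^{-1}\kappa^{-1/2}$ and using \cref{eq:alpha-kappa-rel} in the form $\kappa^{-1/2}\le c_\alpha\alpha^{-1}$, I would obtain a bound of $c_\Gamma^2 c_\alpha\alpha^{-2}\tnorm{\boldsymbol{q}_h}_{q,d}^2$. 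This matches \cref{eq:inf-sup-bh-aux5}, where $\alpha^{-2}$ is likewise treated as part of a uniform constant, i.e.\ independent of $\mu,\kappa,h$.

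\emph{Main obstacle.} The step I expect to be delicate is exactly this interface bound. As a trace bound applied to $\bar{q}_h$ on facets, it requires comparing $\bar{q}_h$ with $q_h$ through the jump term, $\norm[0]{\bar{q}_h}_{\Gamma^I}\le \norm[0]{q_h}_{\Gamma^I}+\norm[0]{q_h-\bar{q}_h}_{\Gamma^I}$. The first term is handled by the broken trace and Poincaré inequalities for mean-zero $q_h$. The second is bounded by $h^{1/2}$ times the penalty seminorm, which is at most a constant times that seminorm. Since both ingredients are encapsulated in \cref{eq:interface-norm-bound-4}, I would invoke it rather than reprove it, and track only the parameter scalings $\mu^{-1}\kappa$ and $\alpha^{-1}\kappa^{1/2}$ to confirm that no $\kappa$- or $\mu$-dependence survives.
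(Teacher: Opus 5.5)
Your proposal is correct and follows the paper's route. For \cref{eq:coer-bound-tilde-a} you write out the standard interior-penalty coercivity and boundedness argument, which the paper only cites. For \cref{eq:coer-bound-tilde-a2} you dominate the interface term by $c_\Gamma^2 c_\alpha \alpha^{-2}$ times the $q,d$-norm via \cref{eq:interface-norm-bound-4} and \cref{eq:alpha-kappa-rel}. This is the same $\alpha$-dependent constant as the paper's $\min(1, c_\alpha^{-1}\alpha^2)$, and you correctly place the need for zero mean over $\Omega^d$ (i.e.\ $\boldsymbol{Q}_0^d$) in that trace bound.
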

\begin{proof}
  \Cref{eq:coer-bound-tilde-a} follows from \cite[Lemmas 5.2 and
  5.3]{wells2011analysis}. To show \cref{eq:coer-bound-tilde-a2} we
  first note that by
  \cref{eq:alpha-kappa-rel,eq:interface-norm-bound-4} we have
  \begin{align*}
    \Tilde{c}_1' \tnorm{\boldsymbol{q}_h}_{q,d}^2
    & \ge \tfrac{1}{2} \Tilde{c}_1' \tnorm{\boldsymbol{q}_h}_{q,d}^2
      + \tfrac{1}{2} \Tilde{c}_1' c_{\Gamma}^{-2} \alpha c_{\alpha}^{-1} \kappa^{1/2} \mu^{-1} \norm[0]{\bar{q}_h}_{\Gamma^I}^2
    \\
    &\ge c \min(1, c_{\alpha}^{-1} \alpha^2) \big( 
      \tnorm{\boldsymbol{q}_h}_{q,d}^2
      + \alpha^{-1} \kappa^{1/2} \mu^{-1} \norm[0]{\bar{q}_h}_{\Gamma^I}^2
      \big),
  \end{align*}
  for all $\boldsymbol{q}_h \in \boldsymbol{Q}_0^d$. The result
  follows after combining with \cref{eq:coer-bound-tilde-a}. \qed
\end{proof}

Next, similar to what we did in \cref{sec:aux-problem-velocity}, we
define the function
$\Tilde{p}_h^L(\bar{t}_h^d, s) \in Q_h^d \cap L_0^2(\Omega^d)$, for
given $\bar{t}_h^d \in \bar{Q}_h^d$ and $s \in L^2(\Omega^d)$, to be
such that when it is restricted to $K \in \mathcal{T}_h^d$ it
satisfies
\begin{equation}
  \label{eq:local-tilde-ad=f}
  \tilde{d}_h^{K}(\tilde{p}_h^L, q_h) 
  = \tilde{g}_h^K(q_h) \quad \forall q_h \in Q(K) \cap L_0^2(\Omega^d),
\end{equation}
where
\begin{align*}
  \tilde{d}_h^{K}(\tilde{p}_h^L, q_h) 
  := \tilde{d}_h((p_h,0),(q_h,0)),
  \quad
  \tilde{g}_h^K(q_h)
  := (s, q_h)_K - \tilde{d}_h((0,\bar{t}_h^d),(q_h,0)).
\end{align*}
Given $g \in L^2(\Omega^d)$, we define
$\tilde{p}_h^{g} := \tilde{p}_h(0,g)$ and
$\tilde{l}_p(\bar{q}_h^d) := \tilde{p}_h^L(\bar{q}_h^d, 0)$ for all
$\bar{q}_h^d \in \bar{Q}_h^d$. Eliminating $p_h$ from
\cref{eq:aux-p-problem} we find the following reduced problem for
$\bar{p}_h^d \in \bar{Q}_h^d$:
\begin{equation}
  \label{eq:condensed-auxproblem}
  \tilde{d}_h((\tilde{l}_p(\bar{p}_h^d), \bar{p}_h^d), (\tilde{l}_p(\bar{q}_h^d), \bar{q}_h^d))
  = (g, \tilde{l}_p(\bar{q}_h^d))_{\mathcal{T}_h^d}
  \quad \forall \bar{q}_h^d \in \bar{Q}_h^d,
\end{equation}
and remark that $(p_h, \bar{p}_h^d)$, where
$p_h = \tilde{p}_h^{g} + \tilde{l}_p(\bar{q}_h^d)$, solves
\cref{eq:aux-p-problem}. (Again, the proof is similar to that of
\cite[Lemma 4]{rhebergen2018preconditioning} and so it is omitted.)

\subsection{Parameter-robustness of the condensed preconditioner}
\label{ss:condensed-preconditioner}

For analysis purposes, we require the variational form of the HDG
discretization after static condensation (see
\cref{eq:condensed-formulation}). To define this variational
formulation, we require local solvers. Let
$t_h := (u_h, p_h) \in X_h$,
$\bar{t}_h := (\bar{m}_h, \bar{r}_h^s, \bar{r}_h^d)$,
$y_h := (v_h, q_h) \in X_h$, $s^s \in [L^2(\Omega^s)]^{\dim}$, and
$s^d \in L^2(\Omega^d)$. We define the following local bilinear and
linear forms for each cell $K \in \mathcal{T}_h^s$,
\begin{align*}
  a_K^s(t_h, y_h)
  &:= d_h^s((u_h, 0), (v_h, 0))|_{K}
    + b_h^s((p_h,0), v_h)|_{K}
    + b_h^s((q_h,0), u_h)|_{K},
  \\
  L^s_K(y_h)
  &:= (s^s , v_h)_{K}
    - d_h^s((0, \bar{m}_h), (v_h, 0))|_{K}
    - b_h^s((0, \bar{r}_h^s), v_h)|_{K}
\end{align*}
and for each cell $K \in \mathcal{T}_h^d$ we define
\begin{align*}
  a_K^d(t_h, y_h)
  &:= d_h^d(u_h, v_h)|_{K}
    + b_h^d((p_h,0), v_h)|_{K}
    + b_h^d((q_h,0), u_h)|_{K},
  \\
  L^d_K(y_h)
  &:= (s^d, q_h)_{K}
    - b_h^d((0, \bar{r}_h^d), v_h)|_{K}.
\end{align*}
Furthermore, for given $\bar{t}_h \in \bar{X}_h$,
$s^s \in [L^2(\Omega^s)]^{\dim}$, and $s^d \in L^2(\Omega^d)$ we define
the functions $u_h^L(\bar{t}_h, s^s, s^d) \in V_h$ and
$p_h^L(\bar{t}_h, s^s, s^d) \in Q_h$ to be such that when restricted
to a cell $K \in \mathcal{T}_h$ they satisfy the local problems
\begin{equation}
  \label{eq:local-problem}
  a_K^j(t_h^L, y_h) = L_K^j(y_h)
  \quad \forall y_h \in V(K) \times Q(K)
  \quad \forall K \in \mathcal{T}_h^j, 
  \quad j \in \{s,d\},
\end{equation}
where $t_h^L := (u_h^L, p_h^L)$, $V(K) := [\mathbb{P}_k(K)]^{\dim}$,
and $Q(K) := \mathbb{P}_{k-1}(K)$.

Given $f^s \in [L^2(\Omega^s)]^{\dim}$ and $f^d \in L^2(\Omega^d)$ we
define $u_h^f := u_h^L(0, f^s, f^d)$ and
$p_h^f := p_h^L(0, f^s, f^d)$. Furthermore, let
$\bar{x}_h := (\bar{u}_h, \bar{p}_h^s, \bar{p}_h^d) \in \bar{X}_h$,
$\bar{y}_h := (\bar{v}_h, \bar{q}_h^s, \bar{q}_h^d) \in \bar{X}_h$,
$l_u(\bar{y}_h) := u_h^L(\bar{y}_h, 0, 0)$, and
$l_{p}(\bar{y}_h) := p_h^L(\bar{y}_h, 0, 0)$. Then, eliminating $u_h$
and $p_h$ from \cref{eq:hdg-discretization}, we find the following
reduced problem for $\bar{x}_h \in \bar{X}_h$:
\begin{equation}
  \label{eq:condensed-formulation}
  \bar{a}_h(\bar{x}_h, \bar{y}_h)
  = (f^s, l_u(\bar{y}_h))_{\mathcal{T}_h^s}
  + (f^d, l_p(\bar{y}_h))_{\mathcal{T}_h^d}
  \quad \forall \bar{y}_h := (\bar{v}_h,\bar{q}_h^s,\bar{q}_h^d) \in \bar{X}_h,
\end{equation}
where 
\begin{align*}
  \bar{a}_h(\bar{x}_h, \bar{y}_h)
  =& d_h((l_u(\bar{x}_h), \bar{u}_h), (l_u(\bar{y}_h), \bar{v}_h))
     + \sum_{j = s, d} \big(b_h^j((l_p(\bar{x}_h), \bar{p}_h^j), l_u(\bar{y}_h))
     + b_h^{I, j}(\bar{p}_h^j, \bar{v}_h) \big)
  \\
   & + \sum_{j = s, d} \big(b_h^j((l_p(\bar{y}_h), \bar{q}_h^j), l_u(\bar{x}_h))
     + b_h^{I, j}(\bar{q}_h^j, \bar{u}_h) \big).
\end{align*}
It can be shown that
$(u_h, \bar{u}_h, p_h, \bar{p}_h^s, \bar{p}_h^d)$, where
$u_h = u_h^f + l_u(\bar{x}_h)$ and $p_h = p_h^f + l_p(\bar{x}_h)$,
solves \cref{eq:hdg-discretization}. A proof of this result uses
arguments similar to those of the proof of \cite[Lemma
4]{rhebergen2018preconditioning} and are therefore omitted. To lay the
link with \cref{ss:Static-condensation}, $A$ in
\cref{eq:abstract-Ax=F-2} is the matrix associated with
$a_h(\cdot, \cdot)$ in \cref{eq:hdg-discretization} while $S_A$ in
\cref{eq:gen-cond-syst} is the matrix associated with
$\bar{a}_h(\cdot, \cdot)$ in \cref{eq:condensed-formulation}.

The remainder of this section is devoted to proving
\cref{eq:robust-schur-precon-1}. To this end, we first establish a few
estimates that will then be used to prove our main result,
\Cref{thm:robustnesstheorem}.
\begin{lemma}
  Given $(\bar{v}_h, \bar{q}_h^s, \bar{q}_h^d) \in \bar{X}_h$, let
  $l_u(\bar{v}_h, \bar{q}_h^s, \bar{q}_h^d)$ and
  $l_p(\bar{v}_h, \bar{q}_h^s, \bar{q}_h^d)$ be as used to define
  \cref{eq:condensed-formulation}. There exist uniform constants
  $c,\hat{c}$ such that
  \begin{subequations}
    \label{eq:estimate-norm-qsvs}
    \begin{align}
      \label{eq:estimate-norm-vs}
      \tnorm{(l_u(\bar{v}_h, \bar{q}_h^s, \bar{q}_h^d), \bar{v}_h)}_{v,s}^2
      \leq& c \big( 2 \mu \eta \tnorm{\bar{v}_h}_{v, h}^2
            + (2 \mu)^{-1} \eta^{-1} \norm[0]{h_K^{1/2} \bar{q}_h^s}_{\partial \mathcal{T}_h^s}^2 \big),
      \\        
      \label{eq:estimate-norm-qs}
      \tnorm{(l_p(\bar{v}_h, \bar{q}_h^s, \bar{q}_h^d), \bar{q}_h^s)}_{q,s}^2
      \leq& \hat{c} \eta \big( \tnorm{(l_u(\bar{v}_h, \bar{q}_h^s, \bar{q}_h^d), \bar{v}_h)}_{v,s}^2
            + (2 \mu)^{-1} \eta^{-1} \norm[0]{h_K^{1/2} \bar{q}_h^s}_{\partial \mathcal{T}_h^s}^2 \big).
    \end{align}
  \end{subequations} 
\end{lemma}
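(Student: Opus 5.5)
The Stokes local problem is decoupled from the Darcy one: in $a_K^s$ and $L_K^s$ only $\bar v_h$ and $\bar q_h^s$ appear, so on $K\in\mathcal{T}_h^s$ the pair $(u_h,p_h):=(l_u,l_p)$ depends only on $(\bar v_h,\bar q_h^s)$. We therefore fix $K\in\mathcal{T}_h^s$, prove both estimates cell-wise with constants independent of $K$, $h_K$ and $\mu$, and sum over cells. On $K$ the local problem \cref{eq:local-problem} with zero sources reads
\begin{align*}
d_h^{s}((u_h,\bar v_h),(v_h,0))|_K - (p_h,\nabla\cdot v_h)_K + \langle \bar q_h^s, v_h\cdot n\rangle_{\partial K} &= 0 \quad \forall v_h\in V(K),\\
(q_h,\nabla\cdot u_h)_K &= 0 \quad \forall q_h\in Q(K).
\end{align*}
Because $\nabla\cdot V(K)\subset Q(K)$, the second equation gives $\nabla\cdot u_h=0$ on $K$.

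For \cref{eq:estimate-norm-vs} we test the first equation with $v_h=u_h$. The pressure term drops out, leaving
\[
d_h^s((u_h,\bar v_h),(u_h,0))|_K = -\langle \bar q_h^s,u_h\cdot n\rangle_{\partial K}.
\]
We then write $(u_h,0)=(u_h,\bar v_h)-(0,\bar v_h)$ and use local coercivity of $d_h^s$ in the $\tnorm{\cdot}_{v,s}$ norm restricted to $K$, which holds for $\eta$ large by the argument of \cite[Lemma 4.2]{rhebergen2017analysis}. This gives $c\tnorm{(u_h,\bar v_h)}_{v,s,K}^2$ plus two contributions.

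The first is $d_h^s((u_h,\bar v_h),(0,\bar v_h))|_K$. Local boundedness bounds it by $\tnorm{(u_h,\bar v_h)}_{v,s,K}\,(2\mu\eta)^{1/2}\norm{h_K^{-1/2}\bar v_h}_{\partial K}$. The target contains only $\tnorm{\bar v_h}_{v,h}$, so we need to pass to $\bar v_h-m_K(\bar v_h)$. To do this we note that a constant $c_0=m_K(\bar v_h)$ can be subtracted from both $u_h$ and $\bar v_h$. The rigid-motion/constant invariance of the local problem shows that $l_u$ shifts by exactly $c_0$: $\nabla\cdot$ and $\varepsilon$ annihilate constants, and $\langle \bar q_h^s,c_0\cdot n\rangle_{\partial K}$ appears on both sides only through $u_h$, which is left unchanged. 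Hence $\tnorm{(u_h,\bar v_h)}_{v,s,K}$ depends only on $\bar v_h-m_K(\bar v_h)$ and on $\bar q_h^s$, and we may replace $\bar v_h$ by $\bar v_h-m_K(\bar v_h)$ throughout.

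The second contribution is $\langle \bar q_h^s,u_h\cdot n\rangle_{\partial K}$. Since $u_h$ is divergence-free, this equals $\langle \bar q_h^s,(u_h-M_K(u_h))\cdot n\rangle_{\partial K}$ after subtracting a constant. The key sub-step is then a local Korn-type inequality of the form
\[
\norm{h_K^{-1/2}(u_h-c)}_{\partial K}\lesssim \norm{\varepsilon(u_h)}_K + \norm{h_K^{-1/2}(u_h-\bar v_h)}_{\partial K} + \tnorm{\bar v_h}_{v,h,K},
\]
with $c$ a suitable constant (or rigid motion). Combining this with Cauchy--Schwarz and Young's inequality bounds the term by $\epsilon\tnorm{(u_h,\bar v_h)}_{v,s,K}^2 + \epsilon\, 2\mu\eta\tnorm{\bar v_h}_{v,h,K}^2 + C(2\mu\eta)^{-1}\norm{h_K^{1/2}\bar q_h^s}_{\partial K}^2$, where the factor $\eta^{-1}$ comes from the $\eta$ weight on the jump term. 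Absorbing the $\epsilon$-terms and summing over $K$ gives \cref{eq:estimate-norm-vs}.

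For \cref{eq:estimate-norm-qs} we use local inf-sup. For $p_h\in\mathbb{P}_{k-1}(K)$ we choose $v_h\in V(K)$ with $\nabla\cdot v_h=p_h$ and $\norm{\nabla v_h}_K\lesssim\norm{p_h}_K$, with $v_h$ vanishing on $\partial K$ where possible (for example a bubble construction). The first local equation then gives
\[
\norm{p_h}_K^2 = d_h^s(\ldots,(v_h,0))|_K + \langle\bar q_h^s,v_h\cdot n\rangle_{\partial K}.
\]
The $d_h^s$ term is bounded by $(2\mu)^{1/2}\eta^{1/2}\tnorm{(u_h,\bar v_h)}_{v,s,K}\,(2\mu)^{1/2}\norm{\nabla v_h}_K$ using the discrete trace inequality and the $\eta$-weighted jump. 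The boundary term is bounded by $\norm{h_K^{1/2}\bar q_h^s}_{\partial K}\norm{\nabla v_h}_K$. Dividing by $2\mu$ yields
\[
(2\mu)^{-1}\norm{p_h}_K^2\lesssim \eta\tnorm{(u_h,\bar v_h)}_{v,s,K}^2+(2\mu)^{-1}\norm{h_K^{1/2}\bar q_h^s}_{\partial K}^2.
\]
The facet part $(2\mu\eta)^{-1}\norm{h_K^{1/2}\bar q_h^s}_{\partial K}^2$ of $\tnorm{\cdot}_{q,s}$ is already of the required form, and summing over $K$ gives \cref{eq:estimate-norm-qs}.

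\textbf{Main obstacle.} The hard step is the local Korn/Poincaré argument that removes $m_K(\bar v_h)$ and controls $u_h$ on $\partial K$ by $\varepsilon(u_h)$ and the jump. Rigid rotations are not annihilated by the $\varepsilon$ term, so I would first try a scaling argument on a reference element, using finite dimensionality and the kernel characterization, to obtain these bounds.
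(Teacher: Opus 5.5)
Your overall structure for \cref{eq:estimate-norm-vs} is right. Your observation is also correct: replacing $\bar v_h$ by $\bar v_h-m_K(\bar v_h)$ shifts the local solution $l_u$ by the same constant and leaves $l_p$ and $\tnorm{(l_u,\bar v_h)}_{v,s,K}$ unchanged. This is equivalent to testing the local problem with $l_u-m_K(\bar v_h)$. The step that fails is your treatment of $\langle \bar q_h^s,u_h\cdot n\rangle_{\partial K}$. Divergence-freeness of $u_h$ does \emph{not} let you replace $u_h$ by $u_h-M_K(u_h)$ there, because
\[
\langle \bar q_h^s,c\cdot n\rangle_{\partial K}=\sum_{F\subset\partial K}(c\cdot n_F)\int_F\bar q_h^s\,ds ,
\]
which is generally nonzero for an arbitrary facet function $\bar q_h^s$. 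What $\nabla\cdot u_h=0$ actually gives is $\int_{\partial K}u_h\cdot n\,ds=0$. That lets you subtract a constant from $\bar q_h^s$, not from $u_h$; subtracting a rigid motion is even less justified. So the route through your local Korn-type inequality, as you set it up, does not go through.

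The Korn step is also unnecessary. After your shift, $m_K(\bar v_h)=0$, and then
\[
\norm[0]{h_K^{-1/2}u_h}_{\partial K}\le \norm[0]{h_K^{-1/2}(u_h-\bar v_h)}_{\partial K}+\norm[0]{h_K^{-1/2}(\bar v_h-m_K(\bar v_h))}_{\partial K}.
\]
The first term is at most $(2\mu\eta)^{-1/2}\tnorm{(u_h,\bar v_h)}_{v,s,K}$, and the second is exactly the local contribution to $\tnorm{\bar v_h}_{v,h}$. Cauchy--Schwarz and Young's inequality then give \cref{eq:estimate-norm-vs} without any Korn or compactness argument. The ``main obstacle'' you identify disappears once you use the shift you already justified.

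For \cref{eq:estimate-norm-qs} you take a genuinely different route from the paper. The paper applies the global inf-sup condition \cref{eq:inf-sup-bhs} to $(l_p,\bar q_h^s)$ and tests each local problem with $w_h^s-m_K(\bar w_h^s)$. You instead use a cellwise right inverse of the divergence from $V(K)$ onto $Q(K)$, which is uniformly bounded by the Piola transform and shape regularity. Your approach is more elementary and self-contained. The local pressure space is all of $\mathbb{P}_{k-1}(K)$, so only $l_p$ needs control. The facet part $(2\mu\eta)^{-1}\norm[0]{h_K^{1/2}\bar q_h^s}^2_{\partial\mathcal{T}_h^s}$ is trivially bounded because $\eta>1$. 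The paper's route simply reuses a result it already has.

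Two repairs are needed in your version:
\begin{enumerate}
\item $v_h$ cannot in general vanish on $\partial K$. If $\int_K p_h\neq0$ then $\int_{\partial K}v_h\cdot n\neq 0$, and for small $k$ there are no interior bubbles.
\item Both of your bounds, on the $d_h^s$ term and on $\langle\bar q_h^s,v_h\cdot n\rangle_{\partial K}$, need $\norm[0]{h_K^{-1/2}v_h}_{\partial K}\lesssim\norm[0]{\nabla v_h}_K$. This holds only after a normalisation such as $M_K(v_h)=0$.
\end{enumerate}
That normalisation costs nothing, since it changes neither $\nabla\cdot v_h$ nor $\nabla v_h$. With it, this part of your argument is complete.
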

\begin{proof}
  The proof for \cref{eq:estimate-norm-vs} is analogous to the one
  presented in \cite[Lemma B.2]{henriquez2025parameter} and so
  omitted.

  We now consider \cref{eq:estimate-norm-qs}. To simplify notation we
  write $l_u := l_u(\bar{v}_h, \bar{q}_h^s, \bar{q}_h^d)$ and
  $l_p := l_p(\bar{v}_h, \bar{q}_h^s, \bar{q}_h^d)$. By
  \cref{eq:inf-sup-bhs}, given
  $(l_p, \bar{q}_h^s) \in \boldsymbol{Q}_h^s$, there exists
  $\boldsymbol{w}_h^s \in \boldsymbol{V}_h$ such that
  \begin{equation*}
    b_h^s((l_p, \bar{q}_h^s), w_h^s) 
    = \tnorm{(l_p, \bar{q}_h^s)}_{q,s}^2
    \quad \text{ and } \quad 
    \tnorm{\boldsymbol{w}_h^s}_{v,s}
    \leq c_2^{-1} \tnorm{(l_p, \bar{q}_h^s)}_{q,s}.
  \end{equation*}
  Choose $v_h = w_h^s - m_K(\bar{w}_h^s)$ and $q_h = 0$ in
  \cref{eq:local-problem} ($j=s$) and add over all cells
  $K \in \mathcal{T}_h^s$ to find:
  \begin{align*}
    & 2 \mu (\varepsilon(l_u), \varepsilon(w_h^s))_{\mathcal{T}_h^s}
      - 2 \mu \langle \varepsilon(l_u) n^s, w_h^s - m_K(\bar{w}_h^s) \rangle_{\partial \mathcal{T}_h^s}
      - 2 \mu \langle l_u , \varepsilon(w_h^s) n^s\rangle_{\partial\mathcal{T}_h^s}
    \\
    & 
      + 2 \mu \eta \langle h_K^{-1} l_u, w_h^s - m_K(\bar{w}_h^s) \rangle_{\partial \mathcal{T}_h^s}
      - (\nabla \cdot w_h^s, l_p)_{\mathcal{T}_h^s}
    \\
    =&- 2 \mu \langle \varepsilon(w_h^s) n^s, \bar{v}_h\rangle_{\partial\mathcal{T}_h^s}
       + 2 \mu \eta \langle h_K^{-1} \bar{v}_h, w_h^s - m_K(\bar{w}_h^s)\rangle_{\partial \mathcal{T}_h^s}
       - \langle \bar{q}_h^s, (w_h^s - m_K(\bar{w}_h^s)) \cdot n^s\rangle_{\partial \mathcal{T}_h^s}.
  \end{align*}
  Following analogous steps to those in the proof of \cite[Lemma
  B.1]{henriquez2025parameter}, the result follows. \qed
\end{proof}

\begin{lemma}
  Given $(\bar{v}_h, \bar{q}_h^s, \bar{q}_h^d) \in \bar{X}_h$, let
  $l_u(\bar{v}_h, \bar{q}_h^s, \bar{q}_h^d)$ and
  $l_p(\bar{v}_h, \bar{q}_h^s, \bar{q}_h^d)$ be as used to define
  \cref{eq:condensed-formulation}. There exists a uniform constant $c$
  such that
  \begin{equation}
    \label{eq:estimate-norm-vd}
    \tnorm{l_u(\bar{v}_h, \bar{q}_h^s, \bar{q}_h^d)}_{v,d}^2
    \leq c \tilde{d}_h((\tilde{l}_p(\bar{q}_h^d), \bar{q}_h^d), (\tilde{l}_p(\bar{q}_h^d), \bar{q}_h^d)).
  \end{equation}
\end{lemma}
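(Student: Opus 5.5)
Write $l_u := l_u(\bar{v}_h,\bar{q}_h^s,\bar{q}_h^d)$ and $l_p := l_p(\bar{v}_h,\bar{q}_h^s,\bar{q}_h^d)$. The plan is to exploit that, on each Darcy cell, the local problem \cref{eq:local-problem} with $j=d$ is a local mixed Darcy problem that only sees $\bar{q}_h^d$. On each $K\in\mathcal{T}_h^d$, $(l_u,l_p)$ satisfies
\begin{align*}
  \mu\kappa^{-1}(l_u,v_h)_K - (l_p,\nabla\cdot v_h)_K + \langle \bar{q}_h^d, v_h\cdot n\rangle_{\partial K} &= 0 \quad \forall v_h\in V(K),\\
  -(q_h,\nabla\cdot l_u)_K &= 0 \quad \forall q_h\in Q(K).
\end{align*}
First I take $v_h=l_u$ in the first equation. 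By the second equation, $\nabla\cdot l_u\in Q(K)$ vanishes, so the pressure term drops out. This gives $\mu\kappa^{-1}\norm[0]{l_u}_K^2 = -\langle \bar{q}_h^d, l_u\cdot n\rangle_{\partial K}$.

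Next I insert an arbitrary cell function $\phi_h\in\mathbb{P}_k(K)$. Since $\nabla\cdot l_u=0$, integration by parts gives $\langle \phi_h, l_u\cdot n\rangle_{\partial K} = (\nabla\phi_h, l_u)_K$. Therefore
\begin{equation*}
  \mu\kappa^{-1}\norm[0]{l_u}_K^2
  = -(\nabla \phi_h, l_u)_K - \langle \bar{q}_h^d-\phi_h, l_u\cdot n\rangle_{\partial K}.
\end{equation*}
I then apply Cauchy--Schwarz with the weights $(\mu^{-1}\kappa)^{1/2}$ and $(\mu\kappa^{-1})^{1/2}$. The discrete trace inequality $\norm[0]{l_u}_{\partial K}\le c h_K^{-1/2}\norm[0]{l_u}_K$ handles the boundary term, and $\eta>1$ lets me absorb constants. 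Summing over cells yields
\begin{equation*}
  \tnorm{l_u}_{v,d}^2 \le c\,\big(\mu^{-1}\kappa\norm[0]{\nabla\phi_h}_{\mathcal{T}_h^d}^2 + \mu^{-1}\kappa\eta\norm[0]{h_K^{-1/2}(\phi_h-\bar{q}_h^d)}_{\partial\mathcal{T}_h^d}^2\big) = c\,\tnorm{(\phi_h,\bar{q}_h^d)}_{q,d}^2 .
\end{equation*}

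Finally I choose $\phi_h=\tilde{l}_p(\bar{q}_h^d)$. This is admissible because $\tilde{l}_p(\bar{q}_h^d)$ is elementwise of degree $k$, hence in $\mathbb{P}_k(K)$, so the integration by parts above is valid for it. The lower bound in \cref{eq:coer-bound-tilde-a} then gives $\tnorm{(\tilde{l}_p(\bar{q}_h^d),\bar{q}_h^d)}_{q,d}^2\le (\tilde{c}_1')^{-1}\tilde{d}_h((\tilde{l}_p(\bar{q}_h^d),\bar{q}_h^d),(\tilde{l}_p(\bar{q}_h^d),\bar{q}_h^d))$, which is \cref{eq:estimate-norm-vd}.

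The main obstacle is the mean-zero constraint in the definition of $\tilde{l}_p$. That function lies in $Q_h^d\cap L_0^2(\Omega^d)$, while the identity $\langle \phi_h, l_u\cdot n\rangle_{\partial K}=(\nabla\phi_h,l_u)_K$ is local and needs no constraint, so admissibility itself is not an issue. What needs checking is that \cref{eq:coer-bound-tilde-a} applies to $(\tilde{l}_p(\bar{q}_h^d),\bar{q}_h^d)$, which is an element of $\boldsymbol{Q}_0^d$ by construction. I expect a short verification of this, together with the constant bookkeeping in the trace step, to be the only delicate points.
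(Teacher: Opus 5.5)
Your proof is correct and follows the paper's argument: test the local Darcy problem to get $\nabla\cdot l_u=0$ and $\mu\kappa^{-1}\norm[0]{l_u}_K^2=-\langle\bar{q}_h^d,l_u\cdot n\rangle_{\partial K}$, add and subtract $\tilde{l}_p(\bar{q}_h^d)$, integrate by parts, and conclude with Cauchy--Schwarz, the discrete trace inequality, $\eta>1$ and the lower bound in \cref{eq:coer-bound-tilde-a}; introducing a general $\phi_h$ before specializing is only cosmetic. One harmless slip: $\tilde{l}_p(\bar{q}_h^d)\in Q_h^d$ is elementwise of degree $k-1$, not $k$, but the integration by parts holds for any polynomial on $K$.
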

 
\begin{proof}
  Let us write $l_u := l_u(\bar{v}_h, \bar{q}_h^s, \bar{q}_h^d)$ and
  $l_p := l_p(\bar{v}_h, \bar{q}_h^s, \bar{q}_h^d)$.  Let
  $K \in \mathcal{T}_h^d$. Choosing $v_h = 0$ in
  \cref{eq:local-problem} ($j=d$), we find that
  $\nabla \cdot l_u = 0$. Choosing now $v_h = l_u$ in
  \cref{eq:local-problem} ($j=d$) we find
  \begin{align*}
    \mu \kappa^{-1} \norm[0]{l_u}_K^2
    & = - \langle \bar{q}_h^d, l_u \cdot n \rangle_{\partial K}
      = \langle \tilde{l}_p(\bar{q}_h^d) - \bar{q}_h^d, l_u \cdot n \rangle_{\partial K}
      - \langle \tilde{l}_p(\bar{q}_h^d), l_u \cdot n\rangle_{\partial K}
    \\
    & = \langle \tilde{l}_p(\bar{q}_h^d) - \bar{q}_h^d, l_u \cdot n \rangle_{\partial K} - (\nabla \tilde{l}_p(\bar{q}_h^d), l_u)_K,
  \end{align*}
  where the last equality is by applying integration by parts. Then,
  summing over all $K \in \mathcal{T}_h^d$, applying the
  Cauchy--Schwarz inequality, a discrete trace inequality, Young's
  inequality, and \cref{eq:coer-bound-tilde-a}, the result
  follows. \qed
\end{proof}

\begin{lemma}
  Given $(\bar{v}_h, \bar{q}_h^s, \bar{q}_h^d) \in \bar{X}_h$, let
  $l_u(\bar{v}_h, \bar{q}_h^s, \bar{q}_h^d)$ and
  $l_p(\bar{v}_h, \bar{q}_h^s, \bar{q}_h^d)$ be as used to define
  \cref{eq:condensed-formulation}. Then
  \begin{equation}
    \label{eq:estimate-norm-qd}
    \tnorm{(l_p(\bar{v}_h, \bar{q}_h^s, \bar{q}_h^d), \bar{q}_h^d)}_{q,d}
    \leq c_3^{-1} \tnorm{l_u(\bar{v}_h, \bar{q}_h^s, \bar{q}_h^d)}_{v,d}.
  \end{equation}
\end{lemma}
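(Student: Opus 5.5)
The plan is to apply the Darcy inf-sup condition \cref{eq:inf-sup-bhd} to the pair $(l_p, \bar{q}_h^d) \in \boldsymbol{Q}_h^d$, where we write $l_u := l_u(\bar{v}_h,\bar{q}_h^s,\bar{q}_h^d)$ and $l_p := l_p(\bar{v}_h,\bar{q}_h^s,\bar{q}_h^d)$. The numerator $b_h^d((l_p,\bar{q}_h^d),v_h)$ will then be identified with a local Darcy bilinear form through the local problems \cref{eq:local-problem} with $j=d$. The key observation is that, for the reduced problem with zero sources ($s^s=0$, $s^d=0$), the velocity equation in \cref{eq:local-problem} on each $K\in\mathcal{T}_h^d$, with test function $(v_h,0)$, reads
\begin{equation*}
  \mu\kappa^{-1}(l_u,v_h)_K - (l_p,\nabla\cdot v_h)_K = -\langle \bar{q}_h^d, v_h\cdot n\rangle_{\partial K}
  \quad \forall v_h \in V(K).
\end{equation*}
Since $V_h^d$ is fully discontinuous, every $v_h\in V_h^d$ restricts to an admissible local test function on each $K$. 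Summing over $K\in\mathcal{T}_h^d$ and moving the facet term to the left-hand side gives
\begin{equation*}
  b_h^d((l_p,\bar{q}_h^d),v_h) = -(l_p,\nabla\cdot v_h)_{\mathcal{T}_h^d} + \langle \bar{q}_h^d, v_h\cdot n\rangle_{\partial\mathcal{T}_h^d}
  = -\mu\kappa^{-1}(l_u,v_h)_{\mathcal{T}_h^d}
  \quad \forall v_h \in V_h^d.
\end{equation*}

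Next we bound this by Cauchy--Schwarz in the weighted inner product $(\cdot,\cdot)_{v,d}$:
\begin{equation*}
  |b_h^d((l_p,\bar{q}_h^d),v_h)| = |(l_u,v_h)_{v,d}| \le \tnorm{l_u}_{v,d}\tnorm{v_h}_{v,d}.
\end{equation*}
Dividing by $\tnorm{v_h}_{v,d}$ and taking the supremum over $0\neq v_h\in V_h^d$, the inf-sup condition \cref{eq:inf-sup-bhd} yields
\begin{equation*}
  c_3\tnorm{(l_p,\bar{q}_h^d)}_{q,d} \le \sup_{0\ne v_h\in V_h^d}\frac{b_h^d((l_p,\bar{q}_h^d),v_h)}{\tnorm{v_h}_{v,d}} \le \tnorm{l_u}_{v,d},
\end{equation*}
which is exactly \cref{eq:estimate-norm-qd}, with constant $c_3^{-1}$ and no further constants.

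The only delicate point is the bookkeeping of the local problem, and it involves two checks. First, the definition of $L_K^d$ contains only the term $-b_h^d((0,\bar{r}_h^d),v_h)|_K$; with $\bar{r}_h^d=\bar{q}_h^d$, this is precisely the facet contribution to $b_h^d$, so the identity above holds with no remainder. Second, the interface term $b_h^{I,d}$ is not part of $b_h^d$ and plays no role here. The inf-sup \cref{eq:inf-sup-bhd} is stated on the space $\boldsymbol{Q}_h^d$ without a mean-value constraint, so it applies directly to $(l_p,\bar{q}_h^d)$. In the expected main obstacle, the sign convention, only absolute values enter, so the sign of the identity is immaterial.
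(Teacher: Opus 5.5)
Your proof is correct and follows essentially the same route as the paper. The paper also tests the Darcy local problem \cref{eq:local-problem} with $q_h = 0$ to get $b_h^d((l_p,\bar{q}_h^d),v_h) = -\mu\kappa^{-1}(l_u,v_h)_{\mathcal{T}_h^d}$ and then combines Cauchy--Schwarz with \cref{eq:inf-sup-bhd}; the only cosmetic difference is that it inserts the specific inf-sup witness $w_h^d$ instead of taking the supremum over all $v_h \in V_h^d$.
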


\begin{proof}
  Write $l_u := l_u(\bar{v}_h, \bar{q}_h^s, \bar{q}_h^d)$ and
  $l_p := l_p(\bar{v}_h, \bar{q}_h^s, \bar{q}_h^d)$. Thanks to the
  inf-sup condition \cref{eq:inf-sup-bhd}, given
  $\bar{q}_h^d \in \bar{Q}_h^d$ there exists $w_h^d \in V_h$ such that
  \begin{subequations}
    \begin{align}
      \label{eq:bhdinfsupequal}
      b_h^d((l_p(\bar{v}_h, \bar{q}_h^s, \bar{q}_h^d), \bar{q}_h^d), w_h^d)
      & = \tnorm{(l_p(\bar{v}_h, \bar{q}_h^s, \bar{q}_h^d), \bar{q}_h^d)}_{q,d}^2,
      \\
      \label{eq:bhdinfsupvdle}
      \tnorm{w_h^d}_{v,d}
      & \leq c_3^{-1} \tnorm{(l_p(\bar{v}_h, \bar{q}_h^s, \bar{q}_h^d), \bar{q}_h^d)}_{q,d}.      
    \end{align}      
  \end{subequations}
  Choose $v_h = w_h^d$ and $q_h = 0$ in \cref{eq:local-problem}, sum
  over all $K \in \mathcal{T}_h^d$, and reorder terms to find
  \begin{equation*}
    b_h^d((l_p, \bar{q}_h^d), w_h^d)
    = - \mu \kappa^{-1} (l_u, w_h^d)_{\mathcal{T}_h^d}.
  \end{equation*}
  Using \cref{eq:bhdinfsupequal}, and applying the Cauchy--Schwarz
  inequality, we find
  \begin{equation*}
    \tnorm{(l_p, \bar{q}_h^d)}_{q,d}^2
    = - \mu \kappa^{-1} (l_u, w_h^d)_{\mathcal{T}_h^d}
    \le \tnorm{l_u}_{v,d} \tnorm{w_h^d}_{v,d}.
  \end{equation*}
  The result follows after using \cref{eq:bhdinfsupvdle}. \qed
\end{proof}

We now present the main result, i.e., we show that
\cref{eq:robust-schur-precon-1} holds for the norms
$\norm[0]{\cdot}_{\boldsymbol{X}_h}$, defined in \cref{eq:normXhSD},
and $\norm[0]{\cdot}_{\bar{X}_h}$, the norm induced by the inner
product $(\cdot, \cdot)_{\bar{X}_h}$ defined in
\cref{eq:SPinnerproduct} with $S_P$ defined in \cref{eq:Sp}.

\begin{theorem}
  \label{thm:robustnesstheorem}
  Given
  $\bar{y}_h := (\bar{v}_h, \bar{q}_h^s, \bar{q}_h^d) \in \bar{X}_h$,
  let $l_u(\bar{y}_h)$ and $l_p(\bar{y}_h)$ be as used to define
  \cref{eq:condensed-formulation}. There exists a uniform constant $c$
  such that
  \begin{equation*}
    \tnorm{(l_u(\bar{y}_h), \bar{v}_h, l_p(\bar{y}_h), \bar{q}_h^s, \bar{q}_h^d)}_{\boldsymbol{X}_h}^2 
    \leq c \eta \big( \eta \tnorm{\bar{v}_h}_{\bar{v},s}^2
    + (2 \mu)^{-1} \eta^{-1} \norm[0]{h_K^{1/2} \bar{q}_h^s}_{\partial \mathcal{T}_h^s}^2
    + \tnorm{\bar{q}_h^d}_{\bar{q}, d}^2 \big).
  \end{equation*}
\end{theorem}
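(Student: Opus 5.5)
The plan is to expand the left-hand side using \cref{eq:normXhSD} and \cref{eq:inner-product} into its five contributions, and bound each with the lemmas just proved. Explicitly,
\begin{align*}
  \tnorm{(l_u(\bar{y}_h), \bar{v}_h, l_p(\bar{y}_h), \bar{q}_h^s, \bar{q}_h^d)}_{\boldsymbol{X}_h}^2
  =& \tnorm{(l_u, \bar{v}_h)}_{v,s}^2 + \alpha\mu\kappa^{-1/2}\norm[0]{\bar{v}_h^t}_{\Gamma^I}^2
     + \tnorm{l_u}_{v,d}^2
  \\
   & + \tnorm{(l_p, \bar{q}_h^s)}_{q,s}^2 + \tnorm{(l_p, \bar{q}_h^d)}_{q,d}^2
     + (\alpha\mu\kappa^{-1/2})^{-1}\norm[0]{\bar{q}_h^d}_{\Gamma^I}^2 .
\end{align*}
The Stokes part and the Darcy part are then treated separately, since the local solvers decouple across $\mathcal{T}_h^s$ and $\mathcal{T}_h^d$.

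\textbf{Stokes block.} By \cref{eq:estimate-norm-qs}, $\tnorm{(l_p,\bar{q}_h^s)}_{q,s}^2$ is bounded by $\hat c\eta$ times the sum of $\tnorm{(l_u,\bar{v}_h)}_{v,s}^2$ and the $\bar q_h^s$ term. Then \cref{eq:estimate-norm-vs} bounds $\tnorm{(l_u,\bar v_h)}_{v,s}^2$ by $2\mu\eta\tnorm{\bar v_h}_{v,h}^2$ plus the same $\bar q_h^s$ term. This yields the factor $\eta$ in front and the $(2\mu)^{-1}\eta^{-1}\norm[0]{h_K^{1/2}\bar q_h^s}^2$ term.

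It remains to relate $2\mu\tnorm{\bar v_h}_{v,h}^2 + \alpha\mu\kappa^{-1/2}\norm[0]{\bar v_h^t}_{\Gamma^I}^2$ to $\tnorm{\bar v_h}_{\bar v,s}^2$. By \cref{eq:face-inner-prod} and the minimization property of the Schur complement, $\tnorm{\bar v_h}_{\bar v,s}^2=\min_{w_h}\tnorm{(w_h,\bar v_h)}_{v,s'}^2$, and this minimum includes the interface term. So I need
\[
  2\mu\tnorm{\bar v_h}_{v,h}^2 \le c\,\tnorm{(w_h,\bar v_h)}_{v,s}^2 \quad \forall w_h .
\]
This is the standard estimate that $\tnorm{\bar v_h}_{v,h}$ is controlled by the HDG energy (as in \cite{rhebergen2018preconditioning,henriquez2025parameter}). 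I would prove it elementwise. Insert $w_h$ and $M_K(w_h)$: the jump part $\bar v_h - w_h$ is controlled by the penalty term, and the fluctuation $w_h - M_K(w_h)$ is controlled by a trace and Korn inequality for piecewise polynomials, modulo rigid motions. The rigid-motion subtlety is handled by subtracting $m_K$ rather than a constant. I expect this to be the main technical obstacle. The cleaner route may be to bound $\tnorm{(l_u,\bar v_h)}_{v,s}^2$ directly by the Schur-complement norm, using that $l_u$ with $\bar q_h^s=0$ is the discrete harmonic extension, i.e.\ the minimizer of $d_h^s$ by \cref{lem:coer-bound-dhs}, and treating the $\bar q_h^s$ contribution by superposition. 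Either way the result is a bound by $c\eta(\eta\tnorm{\bar v_h}_{\bar v,s}^2+\dots)$, with the $\alpha$ interface term absorbed because it appears in both $\tnorm{\cdot}_{\bar v,s}$ and the target.

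\textbf{Darcy block.} By \cref{eq:estimate-norm-qd}, $\tnorm{(l_p,\bar q_h^d)}_{q,d}^2\le c_3^{-2}\tnorm{l_u}_{v,d}^2$. By \cref{eq:estimate-norm-vd}, $\tnorm{l_u}_{v,d}^2\le c\,\tilde d_h((\tilde l_p(\bar q_h^d),\bar q_h^d),(\tilde l_p(\bar q_h^d),\bar q_h^d))$. The upper bound in \cref{eq:coer-bound-tilde-a2} then gives
\[
  \tilde d_h\big((\tilde l_p(\bar q_h^d),\bar q_h^d),(\tilde l_p(\bar q_h^d),\bar q_h^d)\big)
  \le \tilde c_2\Big(\tnorm{(\tilde l_p,\bar q_h^d)}_{q,d}^2+\alpha^{-1}\kappa^{1/2}\mu^{-1}\norm[0]{\bar q_h^d}_{\Gamma^I}^2\Big).
\]
Since $\tilde l_p$ is the $\tilde d_h$-discrete harmonic extension, the lower bound in \cref{eq:coer-bound-tilde-a2} shows that this quantity is equivalent to $\min_{r_h}\tnorm{(r_h,\bar q_h^d)}_{q,d'}^2=\tnorm{\bar q_h^d}_{\bar q,d}^2$ by \cref{eq:inner-prod-redef-b}. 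One care point is the mean-zero constraint: $\tnorm{\cdot}_{q,d}$ only sees $\nabla$ and jumps, so constants can be shifted into $r_h$ freely. Finally, the interface term $(\alpha\mu\kappa^{-1/2})^{-1}\norm[0]{\bar q_h^d}_{\Gamma^I}^2$ is already part of $\tnorm{\bar q_h^d}_{\bar q,d}^2$.

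Summing the Stokes and Darcy bounds and using $\eta>1$ to dominate the $O(1)$ Darcy constants by $c\eta$ gives the claim.
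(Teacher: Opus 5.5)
Your main chain is correct and is essentially the paper's proof. It uses the same expansion, the same estimates \cref{eq:estimate-norm-qsvs,eq:estimate-norm-vd,eq:estimate-norm-qd}, and the same key bound $2\mu\tnorm{\bar{v}_h}_{v,h}^2 \le c\tnorm{(w_h,\bar{v}_h)}_{v,s}^2$ (the paper's \cref{eq:facet-norm-bound}, taken from the literature). The difference is only bookkeeping. The paper evaluates that bound at $w_h=\tilde{l}_u(\bar{v}_h)$, passes to $\langle S_{D^s}\bar{v}_h,\bar{v}_h\rangle$ and to $\tilde{d}_h$, absorbs the interface terms with the lower bounds in \cref{eq:coer-bound-dhs,eq:coer-bound-tilde-a2}, and then invokes the Schur-complement equivalence \cref{eq:schur-comp-equiv-2}. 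You instead use the energy characterizations $\tnorm{\bar{v}_h}_{\bar{v},s}^2=\min_{w_h}\tnorm{(w_h,\bar{v}_h)}_{v,s'}^2$ and $\tnorm{\bar{q}_h^d}_{\bar{q},d}^2=\min_{r_h}\tnorm{(r_h,\bar{q}_h^d)}_{q,d'}^2$ directly, and read the interface terms off them. That is \cref{eq:schur-comp-equiv-2} inlined, and it is slightly leaner because only upper bounds are needed.

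Three of your side remarks are wrong, however.

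(i) The facet bound cannot be proved elementwise as you sketch. On a single $K$, $\norm[0]{\varepsilon(w_h)}_K$ does not control $w_h-M_K(w_h)$, and $m_K(\bar{v}_h)$ is a constant vector, so subtracting it does nothing about rotations. Take $w_h|_K$ an infinitesimal rotation and $\bar{v}_h=w_h$ on $\partial K$: the local right-hand side vanishes, the left-hand side does not. What is needed is a global broken Korn inequality $\norm[0]{\nabla_h w_h}_{\mathcal{T}_h^s}^2 \le c(\norm[0]{\varepsilon(w_h)}_{\mathcal{T}_h^s}^2+\norm[0]{h_K^{-1/2}(w_h-\bar{v}_h)}_{\partial\mathcal{T}_h^s}^2)$, which uses $\bar{v}_h=0$ on $\Gamma^s$ to exclude rigid motions. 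Only after that does the elementwise trace--Poincar\'e step go through.

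(ii) The ``cleaner route'' fails. Testing the local problem with $v_h=0$ shows $\nabla\cdot l_u=0$ on each Stokes cell, so $l_u$ is only a \emph{constrained} minimizer of $d_h^s$. Hence $d_h^s((l_u,\bar{v}_h),(l_u,\bar{v}_h))\ge\langle S_{D^s}\bar{v}_h,\bar{v}_h\rangle$, which is the wrong direction. This is exactly why \cref{eq:estimate-norm-vs} has to pass through $\tnorm{\bar{v}_h}_{v,h}$.

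(iii) Constants cannot be shifted into $r_h$ freely, because $\tnorm{(r_h,\bar{q}_h^d)}_{q,d}$ sees $r_h-\bar{q}_h^d$. The mean-zero issue is better avoided another way. The argument behind \cref{eq:estimate-norm-vd} uses only local divergence-freeness of $l_u$, integration by parts, and a discrete trace inequality, so it works with any $r_h\in Q_h^d$ in place of $\tilde{l}_p(\bar{q}_h^d)$. Choosing $r_h$ as the $P^{p_d}$-minimizer gives $\tnorm{l_u}_{v,d}^2\le c\tnorm{\bar{q}_h^d}_{\bar{q},d}^2$ directly.
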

\begin{proof}
  To simplify notation we write $l_u := l_u(\bar{y}_h)$ and
  $l_p := l_p(\bar{y}_h)$. Combining
  \cref{eq:estimate-norm-qsvs,eq:estimate-norm-vd,eq:estimate-norm-qd},
  we find
  \begin{align*}
    \tnorm{(l_u, \bar{v}_h, l_p, \bar{q}_h^s, \bar{q}_h^d)}_{\boldsymbol{X}_h}^2
    \leq & c \eta \big( 2 \mu \eta \tnorm{\bar{v}_h}_{v,h}^2
           + (2 \mu)^{-1} \eta^{-1} \norm[0]{h_K^{1/2} \bar{q}_h^s}_{\partial \mathcal{T}_h^s}^2 \big)           
    \\    
         & + c \tilde{d}_h((\tilde{l}_p(\bar{q}_h^d), \bar{q}_h^d), (\tilde{l}_p(\bar{q}_h^d), \bar{q}_h^d))
           + \alpha \mu \kappa^{-1/2} \norm[0]{\bar{v}_h^t}_{\Gamma^I}^2
           + (\alpha \mu \kappa^{-1/2})^{-1} \norm[0]{\bar{q}_h^d}_{\Gamma^I}^2.    
  \end{align*}
  We bound the $\norm[0]{\cdot}_{\Gamma^I}$-terms using
  \cref{eq:coer-bound-dhs,eq:coer-bound-tilde-a2} to find
  \begin{equation}
    \label{eq:assumnption-thm-aux1}
    \begin{split}
      \tnorm{(l_u, \bar{v}_h, l_p, \bar{q}_h^s, \bar{q}_h^d)}_{\boldsymbol{X}_h}^2
      \leq & c \eta \big( 2 \mu \eta \tnorm{\bar{v}_h}_{v,h}^2
             + (2 \mu)^{-1} \eta^{-1} \norm[0]{h_K^{1/2} \bar{q}_h^s}_{\partial \mathcal{T}_h^s}^2 \big)
      \\
           & + c \tilde{d}_h((\tilde{l}_p(\bar{q}_h^d), \bar{q}_h^d), (\tilde{l}_p(\bar{q}_h^d), \bar{q}_h^d))
             + c d_h^s((\tilde{l}_u(\bar{v}_h), \bar{v}_h), (\tilde{l}_v(\bar{v}_h), \bar{v}_h)).
    \end{split}
  \end{equation}
  Let $\tilde{l}_u(\cdot)$ be as used to define
  \cref{eq:condensed-auxproblem-u}. Applying
  \cref{eq:facet-norm-bound} with
  $(\tilde{l}_u(\bar{v}_h), \bar{v}_h)$, in combination with the
  coercivity of $d_h^s$ (cf. \cref{eq:coer-bound-dhs}) we obtain
  \begin{equation*}
    2 \mu \eta \tnorm{\bar{v}_h}_{v,h}^2
    \leq \bar{c}^2 \tnorm{(\tilde{l}_u(\bar{v}_h), \bar{v}_h)}_{v,s}^2
    \leq \bar{c}^2 (c_1^s)^{-1} d_h^s((\tilde{l}_u(\bar{v}_h), \bar{v}_h), (\tilde{l}_u(\bar{v}_h), \bar{v}_h)).
  \end{equation*}
  Noting that
  $d_h^s((\tilde{l}_u(\bar{v}_h), \bar{v}_h), (\tilde{l}_u(\bar{v}_h),
  \bar{v}_h)) = \langle S_{D^s} \bar{v}_h, \bar{v}_h
  \rangle_{\bar{V}_h^*, \bar{V}_h}$ and applying
  \cref{eq:schur-comp-equiv-2} in combination with
  \cref{eq:coer-bound-dhs} we find that
  \begin{equation}
    \label{eq:assumnption-thm-aux2}
    2 \mu \eta \tnorm{\bar{v}_h}_{v, h}^2
    \leq c \eta \tnorm{\bar{v}_h}_{\bar{v},s}^2.
  \end{equation}
  Furthermore, combining \cref{eq:coer-bound-tilde-a2} with
  \cref{eq:schur-comp-equiv-2}, we find
  \begin{equation}
    \label{eq:assumnption-thm-aux3}
    \tilde{d}_h((\tilde{l}_p(\bar{q}_h), \bar{q}_h), (\tilde{l}_p(\bar{q}_h), \bar{q}_h))
    \leq c \tnorm{\bar{q}_h^d}_{\bar{q},d}^2.
  \end{equation}
  The results follows after combining
  \cref{eq:assumnption-thm-aux1,eq:assumnption-thm-aux2,eq:assumnption-thm-aux3}. \qed
\end{proof}

\subsection{An alternative preconditioner}
\label{ss:altprecon}

The preconditioner $P$ is defined in \cref{eq:preconditionerPSD}
by:
\begin{multline}
  \label{eq:Pfirst}
  \langle P \boldsymbol{x}_h, \boldsymbol{y}_h \rangle_{\boldsymbol{X}_h^*, \boldsymbol{X}_h}
  = (\boldsymbol{u}_h^s, \boldsymbol{v}_h^s)_{v,s} 
  + \alpha \mu \kappa^{-1/2} \langle \bar{u}_h^t, \bar{v}_h^t\rangle_{\Gamma^I}
  + (u_h^d, v_h^d)_{v,d}
  \\
  + (\boldsymbol{p}_h^s, \boldsymbol{q}_h^s)_{q,s}
  + (\boldsymbol{p}_h^d, \boldsymbol{q}_h^d)_{q,d}
  + \alpha^{-1} \mu^{-1} \kappa^{1/2}  \langle \bar{p}_h^d, \bar{q}_h^d \rangle_{\Gamma^I}.
\end{multline}
In \cref{s:well-posedness} we showed that the HDG discretization
\cref{eq:hdg-discretization} is uniformly well-posed in the norm
induced by the inner-product on the right hand side of
\cref{eq:Pfirst}. From \cref{ss:mardal-winther} it therefore follows
that this preconditioner is a parameter-robust preconditioner for the
HDG discretization \cref{eq:hdg-discretization} before static
condensation. We then showed in
\cref{ss:Static-condensation,ss:preconds,sec:aux-problem,ss:condensed-preconditioner}
that the Schur complement $S_P$ of $P$ is a parameter-robust
preconditioner for \cref{eq:condensed-formulation}, i.e., for the HDG
discretization \emph{after} static condensation.

An alternative preconditioner $\widehat{P}$ for the HDG discretization
\cref{eq:hdg-discretization} is defined by:
\begin{multline*}
  \langle \widehat{P} \boldsymbol{x}_h, \boldsymbol{y}_h \rangle_{\boldsymbol{X}_h^*, \boldsymbol{X}_h}
  = d_h^s(\boldsymbol{u}_h^s, \boldsymbol{v}_h^s)
  + \alpha \mu \kappa^{-1/2} \langle \bar{u}_h^t, \bar{v}_h^t\rangle_{\Gamma^I}
  + (u_h^d, v_h^d)_{v,d}
  \\
  + (\boldsymbol{p}_h^s, \boldsymbol{q}_h^s)_{q,s}
  + \Tilde{d}_h(\boldsymbol{p}_h^d, \boldsymbol{q}_h^d)
  + \alpha^{-1} \mu^{-1} \kappa^{1/2}  \langle \bar{p}_h^d, \bar{q}_h^d \rangle_{\Gamma^I}.
\end{multline*}
Using the boundedness and coercivity of the operator $d_h^s$ we have
norm equivalence between the norm induced by $d_h^s$ and
$\tnorm{\cdot}_{v,s}$.  Likewise, we have equivalence between the norm
induced by $\Tilde{d}_h$ and $\tnorm{\cdot}_{q,d}$, see
\cref{eq:coer-bound-tilde-a}. Then, by \cref{eq:schur-comp-equiv-2},
we note that the Schur complement $S_{\widehat{P}}$ of $\widehat{P}$
defines a norm equivalent to $\tnorm{\cdot}_{\bar{X}_h}$. A
consequence is that $S_{\widehat{P}}^{-1}$ is also a parameter-robust
preconditioner for the condensed HDG discretization given by
\cref{eq:condensed-formulation}.

\section{Numerical examples}
\label{s:numex}

The HDG discretization of the coupled Stokes--Darcy problem and the
preconditioners $S_P$ and $S_{\widehat{P}}$ have been implemented in
NGSolve \cite{schoberl2014c++}. The unstructured meshes used in the
examples are generated by NETGEN \cite{schoberl1997netgen}. In this
section we consider the performance of $S_P$ and $S_{\widehat{P}}$ as
preconditioners to the NGSolve implementation of MINRES and GMRES. In
particular, for different test cases we will determine the number of
iterations needed for preconditioned MINRES and GMRES to reach a
relative preconditioned residual tolerance of $10^{-8}$.

The preconditioners $S_P$ and $S_{\widehat{P}}$ require inverses of
block diagonal matrices (block matrices associated to the trace
velocity degrees of freedom of the Stokes region, the trace pressure
degrees of freedom in the Stokes region, and the trace pressure
degrees of freedom in the Darcy region). When using direct solvers for
these blocks, we will refer to $S_P$ and $S_{\widehat{P}}$ as being
\emph{exact}. When using iterative solvers for these blocks, we will
refer to $S_P$ and $S_{\widehat{P}}$ as being \emph{inexact}. When
using iterative solvers, we use auxiliary space preconditioners (ASP)
\cite{fu2021uniform,fu2023uniform} with one backward Gauss--Seidel
smoothing step in combination with Hypre's BoomerAMG
\cite{yang2002boomeramg} for the trace velocity block of the Stokes
region and the trace pressure block of the Darcy region. A direct
solver is used for the trace pressure block of the Stokes region since
this is a face mass matrix.

\subsection{Manufactured solutions}
\label{ss:manufactured-sol}

In this experiment we consider $\Omega = [0,1]^{2}$, where
$\Omega^s = [0,1] \times [0.5, 1]$ and
$\Omega^d = [0,1] \times [0, 0.5]$. As manufactured solution we choose
\begin{align*}
  u|_{\Omega^s} 
  & = \begin{bmatrix}
    - (2 \pi^2)^{-1} \sin(\pi x_1) e^{x_2/2}
    \\
    \pi^{-1} \cos(\pi x_1) e^{x_2/2}
  \end{bmatrix}, 
  && p|_{\Omega^s} 
     = - \pi^{-1} \mu \kappa^{-1} \cos(\pi x_1) e^{x_2/2},
  \\
  u|_{\Omega^d} 
  & = \begin{bmatrix}
    - 2 \sin(\pi x_1) e^{x_2/2}
    \\
    \pi^{-1} \cos(\pi x_1) e^{x_2/2}
  \end{bmatrix},
  && p|_{\Omega^d} 
     = - 2 \pi^{-1} \mu \kappa^{-1} \cos(\pi x_1) e^{x_2/2}.
\end{align*}
Source terms an boundary conditions are determined accordingly.

Let $k=2$, $\alpha = 1$, $\mu = 1$, and $\kappa = 1$. In
\Cref{tab:manufactured-h} we list the number of iterations for
preconditioned MINRES and GMRES to reach convergence for varying
$h$. We observe that both preconditioners $S_P^{-1}$ and
$S_{\widehat{P}}^{-1}$, exact and inexact versions, are robust with
respect to mesh size $h$. We further observe that
$S_{\widehat{P}}^{-1}$ outperforms $S_P^{-1}$ and therefore choose to
only present results for $S_{\widehat{P}}^{-1}$ for the remaining
experiments.

In \Cref{tab:manufactured-parameter} we study the robustness of
preconditioner $S_{\widehat{P}}^{-1}$ with respect to the physical
parameters $\mu$ and $\kappa$. For this experiment, we use a mesh
consisting of 32768 simplices. We observe that both the exact and
inexact versions of the preconditioner are robust with respect to
these physical parameters.

\begin{table}[tbp]
  \centering
  \begin{tabular}{c|c|ccccc}
    \multicolumn{7}{c}{MINRES} \\
    \hline
    & & \multicolumn{5}{c}{$h$} \\
    \cline{3-7}
    & it. & $2^{-4}$ & $2^{-5}$ & $2^{-6}$ & $2^{-7}$ & $2^{-8}$ \\
    \hline
    \multirow{2}{*}{prec.}
    & $S_{P}^{-1}$                 & 136 (244) & 135 (242) & 133 (239) & 133 (232) & 130 (224) \\ [1ex]
    & $S_{\widehat{P}}^{-1}$   & 108 (221) & 106 (224) & 105 (220) & 103 (217) & 103 (209) \\ [1ex]
    \hline
    \multicolumn{7}{c}{GMRES} \\
    \hline
    & & \multicolumn{5}{c}{$h$} \\
    \cline{3-7}
    & it. & $2^{-4}$ & $2^{-5}$ & $2^{-6}$ & $2^{-7}$ & $2^{-8}$ \\
    \hline
    \multirow{2}{*}{prec.}
    & $S_{P}^{-1}$                 & 53 (76) & 53 (76) & 53 (76) & 53 (75) & 53 (75) \\ [1ex]
    & $S_{\widehat{P}}^{-1}$   & 57 (78) & 56 (77) & 56 (77) & 56 (76) & 56 (76)
  \end{tabular}
  \caption{$h$-robustness for the manufactured solutions case
    described in \cref{ss:manufactured-sol}. We report the number of
    MINRES and GMRES iterations required to reach convergence when
    using the preconditioners $S_{P}^{-1}$ and $S_{\widehat{P}}^{-1}$
    in both their exact and inexact forms. Iteration counts for the
    inexact forms are shown in parentheses.  }
  \label{tab:manufactured-h}
\end{table}

\begin{table}[tbp]
  \centering
  \begin{tabular}{c|c|ccccc}
    \multicolumn{7}{c}{MINRES} \\
    \hline
    & & \multicolumn{5}{c}{$\log_{10}(\mu)$} \\
    \cline{3-7}
    & it. & $-4$ & $-2$ & $0$ & $2$ & $4$ \\
    \hline
    \multirow{5}{*}{$\log_{10}(\kappa)$}
    & 4  & 105 (188) & 95 (188) & 95 (188) & 95 (188) & 95 (188) \\ [1ex]
    & 2  & 103 (210) & 103 (210) & 103 (210) & 103 (210) & 103 (210) \\ [1ex]
    & 0  & 105 (220) & 105 (220) & 105 (220) & 105 (220) & 105 (220) \\ [1ex]
    & -2 & 114 (265) & 114 (265) & 114 (265) & 114 (265) & 114 (265) \\ [1ex]
    & -4 & 140 (238) & 140 (238) & 139 (238) & 139 (238) & 139 (238) \\ [1ex]
    \hline
    \multicolumn{7}{c}{GMRES} \\
    \hline
    & & \multicolumn{5}{c}{$\log_{10}(\mu)$} \\
    \cline{3-7}
    & it. & $-4$ & $-2$ & $0$ & $2$ & $4$ \\
    \hline
    \multirow{5}{*}{$\log_{10}(\kappa)$}
    & 4  & 49 (64) & 56 (76) & 56 (77) & 56 (77) & 56 (77) \\ [1ex]
    & 2  & 49 (63) & 56 (76) & 56 (77) & 56 (77) & 56 (77) \\ [1ex]
    & 0  & 50 (64) & 56 (76) & 56 (77) & 56 (76) & 56 (76) \\ [1ex]
    & -2 & 47 (66) & 57 (82) & 63 (89) & 63 (89) & 63 (89) \\ [1ex]
    & -4 & 57 (81) & 71 (101) & 76 (113) & 76 (113) & 76 (113)    
  \end{tabular}
  \caption{$\mu$- and $\kappa$-robustness for the manufactured
    solutions test case described in \cref{ss:manufactured-sol}. We
    report the number of MINRES and GMRES iterations required to reach
    convergence when using the preconditioner $S_{\widehat{P}}^{-1}$
    in both its exact and inexact form. Iteration counts for the
    inexact forms are shown in parentheses.  }
  \label{tab:manufactured-parameter}
\end{table}

\subsection{Heterogeneous permeability case}
\label{ss:heterogeneous}

For this example we follow a similar configuration to the one
described in \cite[section 6.2]{cesmelioglu2020embedded}. We consider
the domain $\Omega = [0,1]^{2}$, with
$\Omega^s = [0,1] \times [0.5, 1]$ and
$\Omega^d = [0,1] \times [0, 0.5]$, where the boundary associated to
the Stokes region is given by $\Gamma^s = \Gamma^s_1 \cup \Gamma^s_2$,
where $\Gamma_1^s := \{x \in \Gamma^s:\ x_1 = 0 \text{ or } x_1 = 1\}$
and $\Gamma_2^s := \{x \in \Gamma^s:\ x_2 = 1\}$. The boundary for the
Darcy region is denoted by $\Gamma^d$. The boundary conditions are
given by $u = 0$ on $\Gamma_1^s$, $u = (x_2(3/2 - x_2)/5, 0)$ on
$\Gamma_2^s$, and $u \cdot n = 0$ on $\Gamma^d$. Next, the
heterogeneous permeability parameter is defined as follows:
\begin{equation*}
  \kappa(x_1, x_2)
  := 700 (1 + 0.5(\sin(10 \pi x_1) \cos(20 \pi x_2^2) 
  + \cos^2(6.4 \pi x_1) \sin(9.2 \pi x_2) )) + 100.
\end{equation*}
We use a fixed mesh composed of 32768 simplices. In
\Cref{fig:heterogeneousSolution}, we display the permeability,
velocity, and pressure fields corresponding to the configuration
described above with $k=2$, $\mu = 0.1$, $\alpha = 1$, $f^s = 0$, and
$f^d = 0$. In \Cref{tab:heterogeneous} we study the robustness of the
preconditioner $S_{\widehat{P}}^{-1}$ when combined with MINRES and
GMRES with respect to $h$ and $\mu$. We observe that the
preconditioner is parameter-robust in exact and inexact form.
 
\begin{figure}[tbp]
  \centering
  
  \begin{minipage}{0.48\textwidth}
    \centering
    \includegraphics[width=\textwidth]{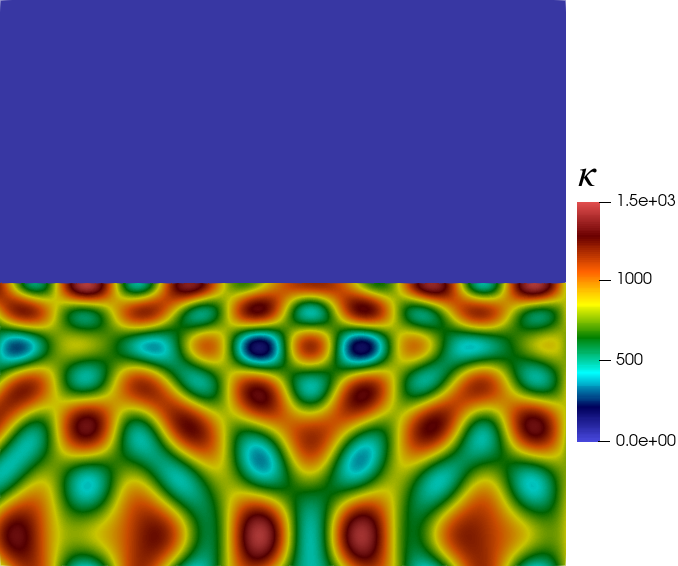}
  \end{minipage}
  \begin{minipage}{0.48\textwidth}
    \centering
    \includegraphics[width=\textwidth]{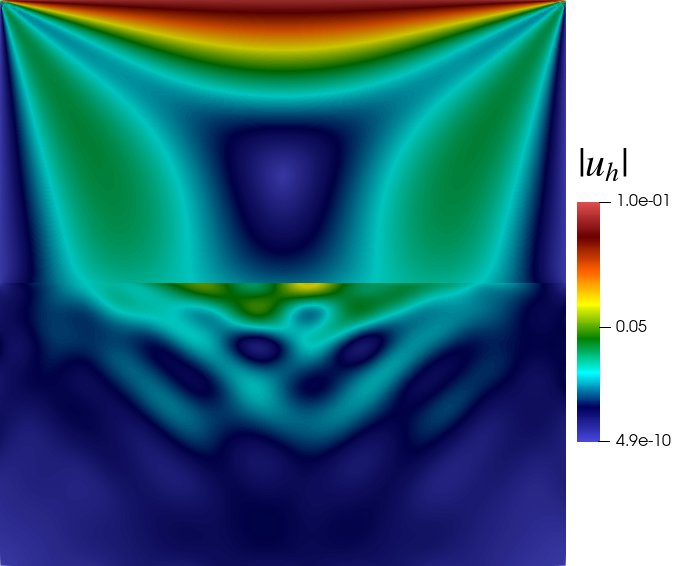}
  \end{minipage}
  
  \begin{minipage}{0.48\textwidth}
    \centering
    \includegraphics[width=\textwidth]{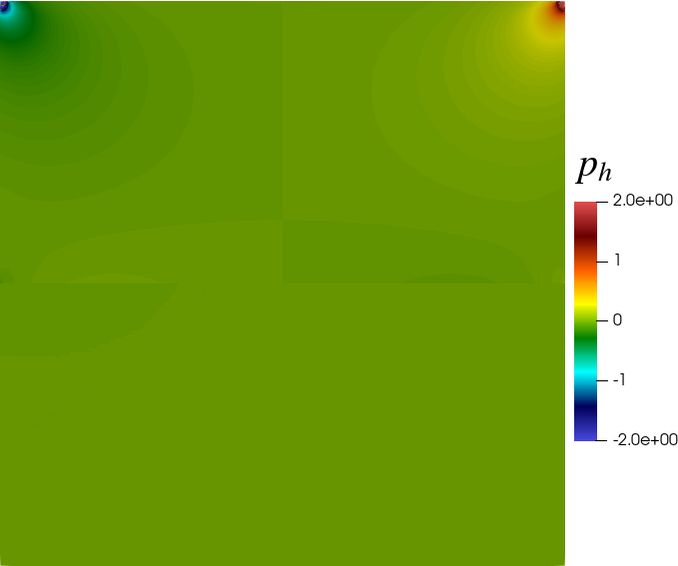}
  \end{minipage}
  
  \caption{Permeability, computed velocity field, and computed
    pressure field for the test case described in
    \cref{ss:heterogeneous}. The pressure lies between $-18$ and $18$
    but the scale is adjusted to $[-2,2]$ to emphasize the corner
    singularities of the pressure. }
  \label{fig:heterogeneousSolution}
\end{figure}

\begin{table}[tbp]
  \centering
  \begin{tabular}{c|c|ccccc}
    \multicolumn{7}{c}{MINRES} \\
    \hline
    & & \multicolumn{5}{c}{$h$} \\
    \cline{3-7}
    & it. & $2^{-4}$ & $2^{-5}$ & $2^{-6}$ & $2^{-7}$ & $2^{-8}$ \\
    \hline
    \multirow{5}{*}{$\log_{10}(\mu)$}
    & 0  & 107 (209) & 109 (217) & 109 (226) & 104 (222) & 97 (209) \\[1ex]
    & -2 & 107 (209) & 109 (217) & 109 (226) & 104 (222) & 97 (209) \\[1ex]
    & -4 & 107 (209) & 109 (217) & 109 (226) & 104 (222) & 97 (209) \\[1ex]
    & -6 & 107 (209) & 109 (217) & 109 (226) & 104 (222) & 97 (209) \\[1ex]
    \hline
    \multicolumn{7}{c}{GMRES} \\
    \hline
    & & \multicolumn{5}{c}{$h$} \\
    \cline{3-7}
    & it. & $2^{-4}$ & $2^{-5}$ & $2^{-6}$ & $2^{-7}$ & $2^{-8}$ \\
    \hline
    \multirow{5}{*}{$\log_{10}(\mu)$}
    & 0  & 57 (78) & 57 (80) & 56 (81) & 55 (80) & 55 (79) \\ [1ex]
    & -2 & 56 (73) & 56 (76) & 56 (78) & 55 (78) & 55 (78) \\ [1ex]
    & -4 & 51 (68) & 50 (69) & 49 (68) & 48 (66) & 48 (65) \\ [1ex]
    & -6 & 51 (68) & 50 (69) & 49 (67) & 47 (65) & 47 (63)    
  \end{tabular}
  \caption{$h$- and $\mu$-robustness for the heterogeneous
    permeability test case described in \cref{ss:heterogeneous}. We
    report the number of MINRES and GMRES iterations to reach
    convergence when using the preconditioner $S_{\widehat{P}}^{-1}$
    in both its exact and inexact forms.  Iteration counts for the
    inexact forms are shown in parentheses.}
  \label{tab:heterogeneous}
\end{table}

\section{Conclusions}
\label{s:conclusions}

We have introduced parameter-robust preconditioners for an HDG
discretization of the coupled Stokes--Darcy model with
Beavers--Joseph--Saffman interface conditions. We first presented a
preconditioner for the non-condensed linear system using the
operator-preconditioning framework of
\cite{mardal2011preconditioning}. Then, using the framework of
\cite{henriquez2025parameter}, we proved conditions that ensure that
the Schur complement of the preconditioner for the non-condensed
problem is a parameter-robust preconditioner for the statically
condensed HDG discretization. The preconditioners presented here avoid
the use of fractional operators or the imposition of interface
conditions on the finite element space, thus simplifying their
implementation. Numerical experiments are presented to support the
analysis.

\subsubsection*{Acknowledgements}

MK acknowledges support from the Research Council of Norway (NFR)
grant \#303362 the European Research Council under grant 101141807
(aCleanBrain). SR is supported by a Discovery Grant (RGPIN-2023-03237)
from the Natural Sciences and Engineering Research Council of Canada.

\bibliographystyle{plain}
\bibliography{references}
\appendix
\section{Proof of inf-sup condition \cref{eq:inf-sup-bhd}}
\label{ap:proofinfsup}

We will prove that for any
$\boldsymbol{q}_h^d \in \boldsymbol{Q}_h^d$ there exists
$v_h \in V_h^d$ such that
$\tnorm{v_h}_{v,d} \le c_3^{-1} \tnorm{\boldsymbol{q}_h^d}_{q,d}$ and
$b_h^d(\boldsymbol{q}_h^d,v_h) =
\tnorm{\boldsymbol{q}_h^d}_{q,d}^2$. First observe that
\begin{align*}
  b_h^d(\boldsymbol{q}_h^d, v_h) 
  :=& (\nabla q_h^d, v_h)_{\mathcal{T}_h^d} - \langle q_h^d - \bar{q}_h^d, v_h \cdot n \rangle_{\partial \mathcal{T}_h^d},
  \\
  \tnorm{\boldsymbol{q}_h^d}_{q,d}^2
  :=& \mu^{-1}\kappa \norm[0]{\nabla q_h^d}_{\mathcal{T}_h^d}^2
      + \mu^{-1}\kappa\eta \norm[0]{h_K^{-1/2}(q_h^d - \bar{q}_h^d)}_{\partial \mathcal{T}_h^d}^2.
\end{align*}
Then, following the proof of \cite[Lemma 4.2]{cesmelioglu2024strongly}
(see also \cite[Lemma 6]{henriquez2025preconditioning}), given
$\boldsymbol{q}_h^d \in \boldsymbol{Q}_h^d$, we define a function
$\tilde{z}_h \in V_h^d$ using the local BDM degrees of freedom
\cite[Proposition 2.3.2]{boffi2013mixed} as follows:
\begin{align*}
  (\tilde{z}_h, w_h)_K 
  &= \mu^{-1} \kappa (\nabla q_h^d, w_h)_K 
  && \forall w_h \in \mathcal{N}_{k-2}(K), \forall K \in \mathcal{T}_h^d,
  \\
  \langle \tilde{z}_h \cdot n, \bar{w}_h \rangle_{\partial K}
  &= - \mu^{-1} \kappa \eta h_K^{-1} \langle q_h^d - \bar{q}_h^d, \bar{w}_h \rangle_{\partial K}
  && \forall \bar{w}_h \in \mathcal{R}_k(\partial K), \forall K \in \mathcal{T}_h^d,
\end{align*}
where $\mathcal{N}_{k-2}(K)$ is the N\'ed\'elec space and
$\mathcal{R}_k(\partial K) = \{ \bar{w}_h \in L^2(\partial K) :
\bar{w}_h|_F \in \mathbb{P}_k(F), \forall F \subset \partial K
\}$. Therefore, if $w_h = \nabla q_h^d$ and
$\bar{w}_h = q_h^d - \bar{q}_h^d$, we find for all
$K \in \mathcal{T}_h^d$:
\begin{align*}
  (\tilde{z}_h, \nabla q_h^d)_K 
  = \mu^{-1} \kappa \norm[0]{\nabla q_h^d}_K^2
  \quad \text{and} \quad
  \langle \tilde{z}_h \cdot n, q_h^d - \bar{q}_h^d \rangle_{\partial K}
  = - \mu^{-1} \kappa \eta \norm[0]{h_K^{-1/2}(q_h^d-\bar{q}_h^d)}_{\partial K}^2,
\end{align*}
and so
$b_h^d(\boldsymbol{q}_h^d, \tilde{z}_h) =
\tnorm{\boldsymbol{q}_h^d}_{q,d}^2$. We are left to show that
$\tnorm{\tilde{z}_h}_{v,d} \le c_3^{-1}
\tnorm{\boldsymbol{q}_h^d}_{q,d}$. Similar to proof of \cite[Lemma
4.4]{linke2018quasi}, there exists a uniform constant $c$ such that
\begin{align*}
  \mu\kappa^{-1} & \norm[0]{\tilde{z}_h}_K^2 + \mu\kappa^{-1}\eta^{-1} h_K\norm[0]{\tilde{z}_h \cdot n}_{\partial K}^2
  \\
  \le & c \sup_{\substack{w_h \in \mathcal{N}_{k-2}(K) \\ 
  \norm[0]{w_h}_{K} = 1}} |\mu^{1/2}\kappa^{-1/2} (\tilde{z}_h, w_h)_K|^2
  + c \sup_{\substack{\bar{w}_h \in R_k(\partial K) \\ \norm[0]{\bar{w}_h}_{\partial K}=1}} h_K\eta^{-1} |\mu^{1/2}\kappa^{-1/2}\langle \tilde{z}_h \cdot n, \bar{w}_h \rangle_{\partial K}|^2
  \\
  =& c\sup_{\substack{w_h \in \mathcal{N}_{k-2}(K) \\ 
  \norm[0]{w_h}_{K} = 1}} |\mu^{-1/2}\kappa^{1/2} (\nabla q_h^d, w_h)_K|^2
  + c\sup_{\substack{\bar{w}_h \in R_k(\partial K) \\ \norm[0]{\bar{w}_h}_{\partial K}=1}} h_K\eta^{-1}|\mu^{-1/2}\kappa^{1/2} \eta h_K^{-1} \langle q_h^d-\bar{q}_h^d, \bar{w}_h \rangle_{\partial K}|^2
  \\
  \leq& c\mu^{-1}\kappa \norm[0]{\nabla q_h^d}_K^2 + c\mu^{-1}\kappa \eta \norm[0]{h_K^{-1/2}(q_h^d-\bar{q}_h^d)}_{\partial K}^2.
\end{align*}
It follows that there exists a uniform constant $c_3$ such that
$\tnorm{\tilde{z}_h}_{v,d} \le c_3^{-1}
\tnorm{\boldsymbol{q}_h^d}_{q,d}$.

\section{Auxiliary results}

\begin{lemma}
  \label{lem:facet-norm-bound}
  There exists a uniform $\bar{c}$ such that
  \begin{equation}
    \label{eq:facet-norm-bound}
    (2 \mu \eta)^{1/2} \tnorm{\bar{v}_h}_{v,h}
    \leq \bar{c} \tnorm{\boldsymbol{v}_h}_{v,s}
    \quad \forall \boldsymbol{v}_h \in \boldsymbol{V}_h.
  \end{equation}
\end{lemma}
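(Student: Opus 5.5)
The plan is to reduce \cref{eq:facet-norm-bound} to a purely local estimate on each cell $K \in \mathcal{T}_h^s$ and then sum. The only relevant cells are $K\in\mathcal{T}_h^s$, since $\bar{V}_h$ lives on $\Gamma_0^s$. Fix such a $K$ and write
\begin{equation*}
  \bar{v}_h - m_K(\bar{v}_h)
  = (\bar{v}_h - v_h) + (v_h - M_K(v_h)) + (M_K(v_h) - m_K(\bar{v}_h))
  \quad \text{on } \partial K .
\end{equation*}
The goal is to bound $h_K^{-1}\norm[0]{\cdot}_{\partial K}^2$ of each of the three pieces by $\norm[0]{\varepsilon(v_h)}_K^2 + \eta h_K^{-1}\norm[0]{v_h-\bar{v}_h}_{\partial K}^2$, up to a uniform constant. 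Multiplying by $2\mu\eta$ and using $\eta>1$ then gives the claim.

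\emph{First piece.} This one is immediate: $2\mu\eta\, h_K^{-1}\norm[0]{v_h-\bar{v}_h}_{\partial K}^2$ is exactly a term of $\tnorm{\boldsymbol{v}_h}_{v,s}^2$.

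\emph{Third piece.} The difference $M_K(v_h) - m_K(\bar{v}_h)$ is a constant vector. Since $M_K(v_h)$ is also a constant, $m_K(M_K(v_h)) = M_K(v_h)$, and so
\begin{equation*}
  |M_K(v_h) - m_K(\bar{v}_h)|
  = |m_K(M_K(v_h) - v_h) + m_K(v_h - \bar{v}_h)|.
\end{equation*}
By Cauchy--Schwarz on $\partial K$, its contribution $h_K^{-1}|\partial K|\,|\cdot|^2$ is bounded by
\begin{equation*}
  h_K^{-1}\norm[0]{v_h-M_K(v_h)}_{\partial K}^2 + h_K^{-1}\norm[0]{v_h-\bar{v}_h}_{\partial K}^2 .
\end{equation*}
So the third piece reduces to the first two.

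\emph{Second piece (main obstacle).} Here the quantity $h_K^{-1}\norm[0]{v_h - M_K(v_h)}_{\partial K}^2$ must be controlled by the symmetric gradient only. The discrete trace inequality gives
\begin{equation*}
  h_K^{-1}\norm[0]{v_h-M_K(v_h)}_{\partial K}^2 \le c\,h_K^{-2}\norm[0]{v_h-M_K(v_h)}_K^2 ,
\end{equation*}
and Poincaré gives $c\norm[0]{\nabla v_h}_K^2$. But $\nabla v_h$ is not bounded by $\varepsilon(v_h)$ cellwise: rigid rotations have $\varepsilon=0$. A purely local argument with $\varepsilon$ therefore fails for the rotational part.

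I would first try to sidestep this by applying the bound to the broken $H^1$ seminorm and invoking the known equivalence, valid for HDG spaces with $\bar{v}_h=0$ on $\Gamma^s$, between
\begin{equation*}
  \norm[0]{\nabla v_h}_{\mathcal{T}_h^s}^2 + \eta\norm[0]{h_K^{-1/2}(v_h-\bar{v}_h)}_{\partial\mathcal{T}_h^s}^2
  \quad\text{and}\quad
  \tnorm{\boldsymbol{v}_h}_{v,s}^2/(2\mu).
\end{equation*}
This is a discrete Korn inequality for hybridized spaces (cf.\ the analysis of \cite{rhebergen2017analysis} and Brenner's broken Korn inequality), and it uses that $\Gamma^s$ has positive measure so rigid motions are excluded globally. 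If that citation is unavailable in the needed form, the alternative is to replace $M_K$ by the local $L^2$-projection onto rigid motions $\mathrm{RM}(K)$. Local Korn then gives
\begin{equation*}
  \norm[0]{\nabla(v_h-\Pi_{\mathrm{RM}}v_h)}_K \le c\norm[0]{\varepsilon(v_h)}_K .
\end{equation*}
In that route, the rotational remainder $\Pi_{\mathrm{RM}}v_h - m_K(\cdot)$ must be controlled through the jump terms $v_h-\bar{v}_h$. The key fact is that a rigid motion with zero mean that is matched on $\partial K$ to a single-valued trace is determined by face data; this needs a scaling argument on the reference element.

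\emph{Assembly.} Summing over $K$ and combining the three pieces, multiplied by $2\mu\eta$, yields $2\mu\eta\tnorm{\bar{v}_h}_{v,h}^2\le\bar{c}^2\tnorm{\boldsymbol{v}_h}_{v,s}^2$. Shape-regularity makes all constants uniform.
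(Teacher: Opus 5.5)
Your proposal is correct and is essentially the paper's argument: the paper defers to Lemma~5 of \cite{rhebergen2018preconditioning}, which is the same three-term splitting on $\partial K$ with a trace/Poincar\'e bound on the middle term. Your explicit appeal to a discrete Korn inequality, using $\bar{v}_h=0$ on $\Gamma^s$ with $|\Gamma^s|>0$, is exactly what ``the same steps'' tacitly require once $\nabla v_h$ is replaced by $\varepsilon(v_h)$ in $\tnorm{\cdot}_{v,s}$. Keep that route rather than the rigid-motion fallback, which cannot work as described: a reference-element scaling argument is local and cannot see the rotational part, just as you observed for the second piece. Also note that the constant you obtain is $\bar{c}\sim\eta^{1/2}$; this is unavoidable (test with a continuous $v_h$ and $\bar v_h=v_h|_{\Gamma_0^s}$) but allowed, since uniform constants need only be independent of $\mu$, $\kappa$, and $h$.
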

\begin{proof}
  The proof follows the same steps as in the proof of \cite[Lemma
  5]{rhebergen2018preconditioning}. \qed
\end{proof}

\begin{lemma}
  \label{lem:interface-norm-bound}
  There exists a uniform constant $c_\Gamma > 0$ such that 
  \begin{align}
    \label{eq:interface-norm-bound-2}
    \norm[0]{\bar{v}_h}_{\Gamma^I} 
    & \leq c_\Gamma \big( \norm[0]{\varepsilon(v_h)}_{\mathcal{T}_h^s}^2 
    + \eta \norm[0]{h_K^{-1/2} (v_h - \bar{v}_h))}_{\partial \mathcal{T}_h^s}^2 \big)^{1/2},\\
    \label{eq:interface-norm-bound-4}
    \norm[0]{\bar{q}_h}_{\Gamma^I} 
    & \leq c_\Gamma \big( \norm[0]{\nabla q_h}_{\mathcal{T}_h^d}^2 
    + \eta \norm[0]{h_K^{-1/2} (q_h - \bar{q}_h))}_{\partial \mathcal{T}_h^d}^2 \big)^{1/2}, 
  \end{align}
  for all $(\boldsymbol{v}_h, \boldsymbol{q}_h) \in \boldsymbol{V}_h
  \times \boldsymbol{Q}_h$.
\end{lemma}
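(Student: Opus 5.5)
The plan is to prove both estimates by reducing the facet traces on $\Gamma^I$ to broken $H^1$-type seminorms. I would use a discrete trace inequality on the continuous-like reconstruction of the broken function, together with a broken Korn (for velocity) or broken Poincaré (for pressure) inequality. Both estimates have the same structure, so I treat \cref{eq:interface-norm-bound-2} in detail and indicate the changes for \cref{eq:interface-norm-bound-4}.

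For \cref{eq:interface-norm-bound-2}, split $\bar{v}_h = v_h + (\bar{v}_h - v_h)$ on each interface face $F\subset\partial K$, $K\in\mathcal{T}_h^s$. The second part is controlled cell by cell:
\[
\norm[0]{\bar{v}_h - v_h}_{F}^2 \le h_K \norm[0]{h_K^{-1/2}(v_h-\bar{v}_h)}_{\partial K}^2 .
\]
Since $h_K\le \mathrm{diam}(\Omega)$ and $\eta>1$, this is bounded by a uniform constant times $\eta\norm[0]{h_K^{-1/2}(v_h-\bar{v}_h)}_{\partial\mathcal{T}_h^s}^2$.

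For the first part, I need a trace inequality $\norm[0]{v_h}_{\Gamma^I}\le c\,\|v_h\|_{1,h}$ for broken polynomials. Here $\|v_h\|_{1,h}^2$ is the broken $H^1$ norm augmented with $h_F^{-1}$-weighted jumps. Such an inequality follows by applying an averaging (Oswald/enriching) operator $E_h v_h\in H^1(\Omega^s)$: one uses the continuous trace theorem on $E_h v_h$, and bounds $v_h - E_h v_h$ locally by the jumps. The jumps across interior faces are bounded through the face unknown, since $[\![v_h]\!] = (v_h^+ - \bar{v}_h) - (v_h^- - \bar{v}_h)$. Hence they are controlled by the penalty term. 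On $\Gamma^s$ one uses $\bar{v}_h=0$, so the boundary "jump" $v_h$ is also controlled by the penalty term.

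It then remains to bound the broken $H^1$ norm by the symmetric-gradient term plus the jump term. This is exactly Brenner's discrete Korn inequality for piecewise $H^1$ vector fields. It applies because $\Gamma^s$ has positive measure and the Dirichlet condition enters through the jump terms on $\Gamma^s$. All constants depend only on shape regularity and the domain, so they are independent of $\mu$, $\kappa$ and $h$.

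For \cref{eq:interface-norm-bound-4} the same argument applies with the scalar broken Poincaré–Friedrichs inequality of Brenner in place of Korn. The main obstacle here is that $\Gamma^d$ carries no Dirichlet condition, so the broken gradient plus jumps cannot control $\norm[0]{q_h}$ unless the mean is fixed. I would handle this by using that the pressure pairs in question are taken with zero mean on $\Omega^d$ (as in $\boldsymbol{Q}_0^d$, where \cref{eq:interface-norm-bound-4} is actually applied). Then the broken Poincaré inequality for mean-zero functions gives $\|q_h\|_{1,h}\le c\big(\norm[0]{\nabla q_h}_{\mathcal{T}_h^d}^2+\eta\norm[0]{h_K^{-1/2}(q_h-\bar{q}_h)}_{\partial\mathcal{T}_h^d}^2\big)^{1/2}$. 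Combined with the trace step and the face splitting as above, this yields the claim. If needed, I would state this mean-zero restriction explicitly for the second estimate.
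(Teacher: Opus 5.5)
Your proof of \cref{eq:interface-norm-bound-2} follows the paper's route. You use the triangle inequality $\norm[0]{\bar v_h}_{\Gamma^I}\le\norm[0]{v_h-\bar v_h}_{\Gamma^I}+\norm[0]{v_h}_{\Gamma^I}$, and control the cell trace by a broken trace inequality combined with Brenner's Korn inequality for piecewise $H^1$ fields. The only difference is that the paper quotes this combined trace estimate from the literature, whereas you sketch it via an enriching operator. For the pressure the paper likewise quotes a discrete trace estimate rather than running your broken Poincar\'e argument.

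Your caveat on \cref{eq:interface-norm-bound-4} is correct, and the paper's proof passes over it. For general $\boldsymbol q_h\in\boldsymbol Q_h$ the estimate is false. Take $q_h\equiv c$ on $\Omega^d$, $\bar q_h^d\equiv c$, and $q_h\equiv -c|\Omega^d|/|\Omega^s|$ on $\Omega^s$, so that $q_h\in L^2_0(\Omega)$. Then the right-hand side vanishes while the left-hand side equals $|c|\,|\Gamma^I|^{1/2}$. Your mean-zero condition on $\Omega^d$ is the right repair, and it covers the use in \cref{eq:coer-bound-tilde-a2}, where $\boldsymbol q_h\in\boldsymbol Q_0^d$.

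Your remark that this is where the estimate is actually applied is incomplete, however. The proof of \cref{lem:inf-sup-bh} also invokes \cref{eq:interface-norm-bound-4}, in \cref{eq:inf-sup-bh-aux5} and in the final display, for arbitrary $\boldsymbol q_h\in\boldsymbol Q_h$. Your restricted version does not justify those steps. Extend the example above by $\bar q_h^s\equiv -c|\Omega^d|/|\Omega^s|$. Then $b_h(\boldsymbol q_h,\boldsymbol v_h)=c\,(1+|\Omega^d|/|\Omega^s|)\langle 1,\bar v_h\cdot n^s\rangle_{\Gamma^I}$. By \cref{eq:interface-norm-bound-2}, the supremum in \cref{eq:inf-sup-bh} is therefore at most a uniform multiple of $(\alpha\kappa^{-1/2})^{1/2}\tnorm{\boldsymbol q_h}_q$. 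This degenerates as $\alpha\kappa^{-1/2}\to 0$, for instance with $\alpha$ fixed and $\kappa\to\infty$, which \cref{eq:alpha-kappa-rel} permits.
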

\begin{proof}
  We recall from \cite[eq. (14)]{cesmelioglu2023hybridizable2},
  \cite[Theorem 4.4]{girault2009dg}, and Korn's inequality
  \cite{brenner2004korn} that
  \begin{equation*}
    \norm[0]{v_h}_{\Gamma^I} 
    \leq c_\Gamma \big( \norm[0]{\varepsilon(v_h)}_{\mathcal{T}_h^s}^2 
    + \eta \norm[0]{h_K^{-1/2} (v_h - \bar{v}_h))}_{\partial \mathcal{T}_h^s}^2 \big)^{1/2}.
  \end{equation*}
  \Cref{eq:interface-norm-bound-2} then follows from
  \begin{equation*}
    \norm[0]{\bar{v}_h}_{\Gamma^I}
    \leq \norm[0]{v_h - \bar{v}_h}_{\Gamma^I} 
    + \norm[0]{v_h}_{\Gamma^I}
    \leq \eta^{1/2} \norm[0]{h_K^{-1/2} (v_h - \bar{v}_h)}_{\partial \mathcal{T}_h^s} 
    + \norm[0]{v_h}_{\Gamma^I}.
  \end{equation*}
  The proof for \cref{eq:interface-norm-bound-4} is the same, except
  using \cite[eq. (34)]{chaabane2017convergence}, i.e., using
  \begin{equation*}
    \norm[0]{q_h}_{\Gamma^I} 
    \leq c_\Gamma \big( \norm[0]{\nabla q_h}_{\mathcal{T}_h^d}^2 
    + \eta \norm[0]{h_K^{-1/2} (q_h - \bar{q}_h))}_{\partial \mathcal{T}_h^d}^2 \big)^{1/2}.
  \end{equation*}
  \qed
\end{proof}

Finally, we recall from \cite[Theorem 3]{henriquez2026robust} that if
$A$ is an operator with block structure \cref{eq:abstract-Ax=F-2}, $P$
is an operator with block structure \cref{eq:gnral-precond-matrix},
and if there exist uniform constants $c_1,c_2$ such that
\begin{equation}
  \label{eq:schur-comp-equiv-1}
  c_1 \norm[0]{\boldsymbol{x}_h}_{\boldsymbol{X}_h}^2
  \leq a_h(\boldsymbol{x}_h , \boldsymbol{x}_h)
  \leq c_2 \norm[0]{\boldsymbol{x}_h}_{\boldsymbol{X}_h}^2
  \quad \forall \boldsymbol{x}_h \in \boldsymbol{X}_h,
\end{equation}
then,
\begin{equation}
  \label{eq:schur-comp-equiv-2}
  c_1 \norm[0]{\bar{x}_h}_{\bar{X}_h}^2
  \leq \langle S_A \bar{x}_h, \bar{x}_h \rangle_{\bar{X}_h^*, \bar{X}_h}
  \leq c_2 \norm[0]{\bar{x}_h}_{\bar{X}_h}^2
  \quad \forall \bar{x}_h \in \bar{X}_h.
\end{equation}

\end{document}